\def\tsc#1{\csdef{#1}{\textsc{\lowercase{#1}}\xspace}}
\newtheorem{proposition}{Proposition}
\newdefinition{rmk}{Remark}
\newproof{pf}{Proof}
\newlength{\mywidth}
\begin{document}
\let\WriteBookmarks\relax
\def\floatpagepagefraction{1}
\def\textpagefraction{.001}

\shorttitle{An efficient mixed-integer linear programming formulation for solving influence diagrams}    

\shortauthors{Terho, Oliveira, Salo, Munari}  

\title [mode = title]{An efficient mixed-integer linear programming formulation for solving influence diagrams}  


\tnotetext[1]{This work was supported by the Research Council of Finland (funding decision no. 332180) and the Profi-8 project funded by the Academy of Finland (funding decision no. 365215).}

%

\author[1]{Topias Terho}



\ead{topias.terho@aalto.fi}


\credit{Investigation, Methodology, Software, Formal analysis, Writing – original draft, Writing – review \& editing}

\affiliation[1]{organization={Department of Mathematics and Systems Analysis, Aalto University},
    city={Espoo},
    postcode={02150}, 
    country={Finland}}

\affiliation[2]{organization={DTU Management, Technical University of Denmark},
                addressline={}, 
    city={Kgs. Lyngby},
    postcode={2800}, 
    country={Denmark}}

    \affiliation[3]{organization={Department of Production Engineering, Federal University of S\~ao Carlos},
    city={S\~ao Carlos - SP},
    postcode={13565-905}, 
    country={Brazil}}

    \author[1,2]{Fabricio Oliveira}
\cormark[2]
\credit{Project administration, Supervision, Funding acquisition, Writing – original draft, Writing – review \& editing}

\ead{fabricio.oliveira@aalto.fi}

\author[1]{Ahti Salo}

\ead{ahti.salo@aalto.fi}

\credit{Supervision, Funding acquisition, Writing – original draft, Writing – review \& editing}

\author[3]{Pedro Munari}

\ead{munari@dep.ufscar.br}

\credit{Methodology, Writing – original draft}


\cortext[cor1]{Corresponding author}


\begin{abstract}
\protect\onehalfspacing
Influence diagrams represent decision-making problems with interdependencies between random events, decisions, and consequences. Traditionally, they have been solved using algorithms that determine the expected utility-maximizing decision strategy. In contrast, state-of-the-art solution approaches convert influence diagrams into a mixed-integer linear programming (MILP) model, which can be solved with powerful off-the-shelf MILP solvers. From a computational standpoint, the existing MILP formulations can be efficiently solved when applied to influence diagrams that represent periodic (or sequential) decision processes, which can be cast as partially observable Markov Decision Processes. However, they are inefficient in problems that lack a periodic structure or if the nodes in the influence diagram have large state spaces, thus limiting their practical use. In this paper, we present an efficient MILP formulation that is specifically designed for influence diagrams that are challenging for the earlier MILP formulation-based methods. Additionally, we present how the proposed formulation can be adapted to maximize conditional value-at-risk and how chance and logical constraints can be incorporated into the formulation, thus retaining the modeling flexibility of the MILP-based methods. Finally, we perform computational experiments addressing problems from the literature and compare the computational efficiency of the proposed formulation against the available MILP formulations for the reported influence diagrams. We find that the MILP models based on the proposed formulations can be solved significantly more efficiently compared to the state-of-the-art when solving influence diagrams that cannot be cast as partially observable Markov decision processes. 
\end{abstract}



\begin{keywords}
Decision analysis \sep Influence diagrams \sep Bayesian networks \sep mixed-integer linear programming
\end{keywords}

\maketitle
\onehalfspacing 

\section{Introduction}\label{sec:Intro}


    Influence diagrams \citep{howard2005influenceretro,howard2005influence} are directed graphs that represent decision problems with dependencies between uncertain events, decisions, and consequences. They are widely used to structure decision problems due to their intuitive representation of complex decision-making problems and rigorous mathematical foundations \citep{oni2024evaluation,wan2022stay,weflen2022influence,gonzalez2019adversarial}. Depending on the context, they have also been referred to as Bayesian decision networks \citep{gu2024bayesian, khosravi2020bayesian,lu2022bayesian, penman2020bayesian} or decision networks \citep{zhang1994computational}.
    
    In general, the solution of the underlying problem represented as an influence diagram is a decision strategy that maximizes the expected utility of consequences. There are well-established algorithms for finding the optimal decision strategy. Notable examples include approaches based on node removals and arc reversals \citep{shachter1986evaluating} and dynamic programming \citep{tatman1990dynamic}. However, these algorithms can only be employed under restrictive assumptions such as perfect recall, which means that all earlier decisions and the information on which these decisions have been based are remembered when making later ones, and acyclicity, which means that the directed graph does not contain any cycles.

    Perfect recall as an assumption does not hold in many real-life applications. \citet{lauritzen2001representing} developed the notion of a limited memory influence diagram, which relaxes this assumption. Building upon methods for influence diagrams, many well-established algorithms have been developed to find the decision strategy that maximizes expected utility in their limited memory variants. Examples include the single-value-update algorithm \citep{lauritzen2001representing}, multiple-value-update \citep{maua2012solving} and the \textit{k}-neighborhood search algorithm \citep{maua2016fast}. The developments presented in this paper apply to influence diagrams in which perfect recall is not assumed. Therefore, we use the terms influence diagram and limited memory influence diagram interchangeably.

    Decision Programming \citep{salo2022decision} and Rooted Junction Trees (RJT) \citep{parmentier2020integer} are two frameworks that transform a limited memory influence diagram into a mixed-integer linear programming (MILP) model. The MILP formulations take advantage of the property that influence diagrams can be represented as piecewise multilinear mappings from the input parameters (probabilities and decisions) into outputs (expected utilities) \citep{borgonovo2014decision}. An advantage of the MILP frameworks over specialized algorithms is that the resulting optimization models can be solved with efficient and ever-improving MILP solvers such as Gurobi \citep{gurobi}. Another advantage of MILP-based frameworks is their flexibility. The optimization models can be extended to incorporate technical constraints and consider alternative risk-accounting objective functions such as conditional value-at-risk (CVaR), which represents the expected utility of consequences conditioned on the occurrence of the least preferred $\alpha$-tail of the consequence distribution, $\alpha$ being a probability level. In contrast, the aforementioned specialized algorithms for solving influence diagrams are, to the best of our knowledge, only applicable to finding the expected utility-maximizing decision strategy. 

    The RJT formulation is based on a tree decomposition of the underlying influence diagram. The computational efficiency of the resulting MILP model depends on the tree decomposition that is used, which in turn depends on the structure of the underlying influence diagram. This MILP formulation performs remarkably well when applied to decision problems that can be cast as partially observable Markov Decision Processes (POMDPs); these are problems with a periodic structure. For a selected set of examples of influence diagrams that can be cast as POMDPs, we refer to the chess problem in \citep{parmentier2020integer}, the classical pig-farm problem \citep{lauritzen2001representing}, and the statistical process control problem \citep{cobb2021statistical}. The similarities and differences of influence diagrams and POMDPs are further discussed in \citet{hansen2021integrated}, who also presents a specialized algorithm that integrates the variable elimination algorithm for influence diagrams and the value iteration algorithm for POMDPs.
    
     Recent numerical experiments have shown that the MILP formulations based on the RJT framework computationally outperform those based on the Decision Programming framework in finding the CVaR-maximizing decision strategy, and the expected utility-maximizing decision strategy subject to chance constraints \citep{herrala2025risk}. To the best of our knowledge, no studies have compared the RJT framework with Decision Programming in finding pure expected utility-maximizing decision strategies. On the other hand, as demonstrated in our numerical experiments, the MILP model constructed with the RJT framework is inefficient to solve if the underlying influence diagram yields a tree with a large treewidth. This happens, for instance, if the influence diagram lacks the periodic structure of POMDPs or if the influence diagram contains nodes with a large number of possible states. We refer the reader to \citep{parmentier2020integer,herrala2025risk} for a more rigorous discussion on the size of the RJT model and its relationship to the treewidth. Being inefficient when nodes with large state spaces are present is especially problematic due to the well-documented challenges of representing continuous events in influence diagrams \citep{bielza2010modeling}. There exists some work in representing continuous variables in influence diagrams, such as methods using mixtures of truncated exponentials that are easy to integrate in closed form \citep{cobb2007influence,cobb2008decision}, or Monte-Carlo sampling based method \citep{charnes2004multistage}. However, the resulting problems are in general extremely challenging nonlinear (and possibly large-scale) problems that can only be solved approximately. Due to these computational hurdles, the standard approach is to discretize the continuous variables. In the context of Bayesian networks and influence diagrams, discretization can be performed arbitrarily or by using specialized statistical methods \citep{beuzen2018comparison}. Often, the discretization is based on expert judgement (for example, see  \citet{keating2023using,mancuso2021optimal,de2025choosing}). In larger influence diagrams, such as \citet{keating2023using}, \citet{contasti2023balancing}, or in \citet{li2024dynamic}, the computational limitations of the RJT formulation may force experts to settle for a coarser discretization than what is acceptable, which is likely to erode the accuracy of the solution recommendations. 

    Against this backdrop, this paper introduces a reformulation of the Decision Programming model, which is specifically tailored to efficiently solve influence diagrams that are difficult for the RJT framework, i.e., influence diagrams with large state spaces and a nonperiodical structure with comparatively many random variables and few decisions. This is a typical setup in many practical applications, as evidenced by numerous reported examples \citep{basnet2023selecting, contasti2023balancing,keating2023using,khakzad2021optimal, li2024dynamic, neuvonen2025optimizing}. 
    
    Our contributions address a key limitation of the RJT framework by introducing a MILP formulation to solve influence diagrams that are not tractable with the existing MILP formulations. Moreover, our contributions retain the modeling flexibility of the current MILP frameworks as we demonstrate how risk-aversion can be incorporated into the optimization model by employing risk-averse objective functions, such as CVaR, or by employing chance constraints. We demonstrate the computational efficiency of our reformulations by solving variants of several problems from the literature and comparing the computational times against the RJT and Decision Programming formulations. In particular, our computational experiments demonstrate that the approach based on our reformulation significantly outperforms its RJT counterpart in influence diagrams that cannot be cast as POMDPs or that have nodes associated with large state spaces. Therefore, our reformulation complements the RJT framework and advances the state-of-the-art in solving decision problems represented as influence diagrams.
    
    The remainder of the paper is organized as follows. Section \ref{sec:Background} presents influence diagrams and the Decision Programming-based framework. In Section \ref{sec:Reformulation}, we develop the reformulated Decision Programming model and discuss computational aspects related to this formulation. In Section \ref{sec:Results}, we report our computational experiments. Finally, Section \ref{sec:Conclusions} provides conclusions and directions for further research.  
\section{Background}\label{sec:Background}

\subsection{Influence Diagrams}

An influence diagram $G = (N, A)$ is a directed graph consisting of nodes $N = C \cup D \cup V$ and directed arcs $A$ connecting the nodes. Nodes are divided into three sets that represent different components of a decision problem. Chance nodes $C$ are realizations of random events, and decision nodes $D$ represent choices among alternatives. Finally, value nodes $V$ represent the consequences of the decision problem. The arcs in the influence diagram represent dependencies between the nodes. For each node $n \in N$, the information set $I(n)$ consists of the direct parents of $n$, i.e., the nodes from which there is an arc to $n$.

Chance and decision nodes have discrete and finite sets of states $S_n$, $n \in C \cup D$, which represent possible outcomes at chance nodes $n \in C$ or alternatives at decision nodes $n \in D$. The joint state space of a set of nodes $M \subseteq C \cup D$ is the Cartesian product $S_M = \bigtimes_{m \in M}S_m$. Moreover, the lowercase notation $s_n \in S_n$ denotes a state, i.e., an element in the state space. Similarly, $s_M = (s_m)_{m \in M}$ denotes a combination of states of the nodes contained in $M$. The states of nodes contained in an information set $s_{I(n)} \in S_{I(n)}$ is the \emph{information state} of node $n \in N$. 

The states of chance nodes $s_c \in S_c$, $c \in C$, are realizations of random variables that are characterized by a probability distribution $\mathbb{P}(s_c \mid s_{I(c)})$ that is conditional on the respective information state. These conditional probabilities are parameters in the numerical specification of the influence diagram. They can be estimated, for example, by using statistical methods or selected by eliciting expert statements \citep{bielza2010modeling,falconer2024eliciting,french2024whose}. 

At decision nodes, the states $s_d \in S_d$, $d \in D$, represent choices among decision alternatives, whereas the information state specifies the information that is available when making the decision at $d$. The selected decision alternative is determined by a decision strategy $Z_d: S_{I(d)} \to S_d$. We represent the strategy as a binary variable $z(s_d \mid s_{I(d)}) \in \{0,1\}$, such that $z(s_d \mid s_{I(d)}) = 1$ if and only if the alternative $s_d$ is selected for the information state $s_{I(d)}$; otherwise, $z(s_d \mid s_{I(d)}) = 0$. The collection of node-specific decision strategies is denoted as $Z: \bigtimes_{d \in D}S_{I(d)} \to \bigtimes_{d \in D}S_d$. At each decision node, exactly one decision alternative per each information state is selected, i.e., 
\begin{equation}
\label{eq:feasible_dec_strat}
\sum_{s_d \in S_{d}}z(s_d \mid s_{I(d)}) = 1, \ \forall s_{I(d)} \in S_{I(d)}.
\end{equation}A decision strategy is \emph{feasible} if, in addition to \eqref{eq:feasible_dec_strat}, $Z$ fulfills other relevant problem-specific constraints enforced on the decision strategy, such as budget limitations and logical dependencies.

The consequences associated with decisions and realizations of chance events are modeled by value nodes, which are associated with a utility function $U: S_{I(v)} \rightarrow \mathbb{R}, v \in V$ that maps the information states to a real number that represents the utility of the consequences. In case several value nodes are used, we assume that the aggregate utility is given as a sum over the node-specific utilities.

Traditionally, an influence diagram is used to find a feasible decision strategy that maximizes the expected utility of the consequences. Typically, the decision strategy is not subject to other constraints than \eqref{eq:feasible_dec_strat}. Alternatively, metrics such as CVaR can be used in place of expected utility when searching for risk-averse decision strategies. 

\subsection{Decision Programming}
\label{sec:dec_proc}

Decision programming \citep{salo2022decision} is a framework for converting an influence diagram into an MILP model. The formulation below has been proposed by \citet{hankimaa2023solving} to improve the original MILP model with valid inequalities and reformulations.

In the Decision Programming formulation, a \emph{path} $s = (s_n)_{n \in C \cup D}$ is a sequence of states for each chance and decision node. The set of all possible paths is denoted as $S = S_{C \cup D}$. A sequence of states $(s_m)_{m \in M}$ for a subset $M \subseteq C \cup D$ of nodes is called a path segment. Probability $p(s) = \prod_{c \in N^{c}}\mathbb{P}(s_c \mid s_{{I(c)}})$ represents the product of conditional probabilities of the states of the chance nodes contained in path $s$. Hereinafter, we use $S^{>}:= \{s \in S \mid p(s) > 0 \}$ to denote the set of those paths for which this probability is strictly greater than zero. 

An \emph{extension} of a path segment $s_M \in S_M, M \subsetneq C \cup D$ is defined as $E(s_M) := \{s' \in S \mid s'_M = s_M\}$, that is, the set of all paths that contain the segment $s_M$. Analogously, an extension over the paths that have a probability strictly greater than zero is denoted as $E^{>}(s_M) := E(s_M) \cap S^{>}$.

The utility associated with a path is calculated as $U(s) = \sum_{v \in V}U(s_{I(v)})$. Without loss of generality, we assume that $U(s) > 0, \forall s \in S$. Because the utilities are invariant to affine transformations and  $U^{>}(s) = U(s) - \min_{s' \in S}\{U(s')\} + \epsilon$ with $\epsilon > 0$, can be used to derive strictly positive utilities. Both $p(s)$ and $U^{>}(s)$ are parameters that can be precalculated for each path $s \in S$.  

The variables in the MILP are decision strategy variables $z(s_d \mid s_{I(d)}) \in \{0,1\}, \forall d \in D, s_d \in S_{d}, s_{I(d)} \in S_{I(d)}$, and the \emph{path variables} $x(s) \in \{0,1\}, \forall s \in S^{>}$, which are defined as
\begin{equation*}
x(s) = 1 \Leftrightarrow z(s_d \mid s_{I(d)}) = 1, \forall d \in D.\end{equation*} 

The path variables indicate which paths are active given the selected decision strategy and therefore contribute to the objective function. Specifically, we say that a path $s \in S$ is active with a decision strategy if $z(s_d \mid s_{I(d)}) = 1, \forall d \in D$, in which case $x(s) = 1$. On the other hand, if $z(s_d \mid s_{I(d)}) = 0$ for some $d \in D$, the path is not active given the decision strategy, and the path variable takes value $x(s) = 0$. Given $x(s)$, $\forall s \in S^{>}$, the expected utility of strategy $Z$ is 
\begin{equation}
\label{eq:eu}
    \sum_{s \in S^{>}}p(s)U(s)x(s).
\end{equation}
The optimal decision strategy that maximizes expected utility can be evaluated by solving the MILP model \eqref{eq:dp_objective}-\eqref{eq:dp_path_compat}
\begin{align}
    \max_{z,x} &\sum_{s \in S^{>}}p(s)U(s)x(s) \label{eq:dp_objective}\\
    \text{s.t.} &\sum_{s_d \in S_{d}}z(s_d \mid s_{I(d)}) = 1,  ~&\forall d \in D, s_{I(d)} \in S_{I(d)}  \label{eq:dp_strategy1}\\ 
    & \sum_{s \in E^{>}(s_d,s_{I(d)})} x(s) \leq \Gamma(s_d \mid s_{I(d)}) z(s_d \mid s_{I(d)}),~&\forall d \in D, s_d \in S_d, s_{I(d)} \in S_{I(d)} \label{eq:dp_path_compat_strategy}\\
    & z(s_d \mid s_{I(d)}) \in \{0,1\},
     ~&\forall d \in D, s_d \in S_d, s_{I(d)} \in S_{I(d)}\label{eq:dp_strategy} \\
    & x(s) \in [0,1], ~&\forall s \in S^{>}. \label{eq:dp_path_compat}
\end{align}

Constraints \eqref{eq:dp_strategy1} are equivalent to \eqref{eq:feasible_dec_strat}. Constraints \eqref{eq:dp_path_compat_strategy} enforce that if $z(s_d \mid s_{I(d)}) = 0$, then $x(s)$ attains value 0 for all paths $s \in E^{>}(s_d, s_{I(d)})$. The value $\Gamma(s_d \mid s_{I(d)})$ acts as a big-M value so that if $z(s_d \mid s_{I(d)}) = 1$, the right hand side of \eqref{eq:dp_path_compat_strategy} enables all path variables $x(s)$, where $s \in E^{>}(s_d,s_{I(d)})$, to take value 1. \citet{hankimaa2023solving} propose using $\Gamma(s_d \mid s_{I(d)})$ as
\begin{equation}
\Gamma(s_d\mid s_{I(d)}) = 
\min\left\{|E^{>}(s_d, s_{I(d)})|, \frac{|E(s_d,s_{I(d)})|}{\prod_{k \in D \setminus (\{d\} \cup I(d))}|S_k|}\right\},
\end{equation}

\noindent where $|E^{>}(s_d, s_{I(d)})|$ is the number of elements in the sum in the left hand side of Constraint \eqref{eq:dp_path_compat_strategy} and $\frac{|E(s_d,s_{I(d)})|}{\prod_{k \in D \setminus (\{d\} \cup I(d))}|S_k|}$ is an upper bound on how many path variables can be strictly greater than zero in the sum on the left hand side of Constraint \eqref{eq:dp_path_compat_strategy} from decisions at decision nodes other than $d$. For a more rigorous explanation of the parameter $\Gamma(s_d \mid s_{I(d)})$, see \citet{hankimaa2023solving}. Constraints \eqref{eq:dp_strategy} and \eqref{eq:dp_path_compat} define the domain of the variables.

Though the domain of path variables allows for fractional solutions, we note that the variables find the optimal solution from the set $\{0,1\}$. This happens because the upper bound on constraint \eqref{eq:dp_path_compat_strategy} is either 0 (if for any $d \in D, z(s_d \mid s_{I(d)}) = 0$) or $ \Gamma(s_d \mid s_{I(d)})$, which is greater than or equal to $1$. If $ \Gamma(s_d \mid s_{I(d)})$ is the upper bound in constraints \eqref{eq:dp_path_compat_strategy}, constraints \eqref{eq:dp_path_compat} imposes a tighter upper bound for $x(s)$. Because $U(s) > 0, \forall s \in S$, the maximization of the objective function guides all possible $x(s)$ to take the highest possible value, which is either $0$ or $1$. The use of continuous path variables is further discussed in \citet{hankimaa2023solving}, where the authors demonstrate that continuous path variables significantly improve the computational efficiency of \eqref{eq:dp_objective}-\eqref{eq:dp_path_compat}.

\section{Contributions}\label{sec:Reformulation}

\subsection{Reformulated Decision Programming model}

As \citet{salo2022decision} note, the Decision Programming formulation suffers from the curse of dimensionality. The computational studies in \cite{hankimaa2023solving} and \cite{herrala2025risk} further show that the solution time related to the Decision Programming MILP formulation grows exponentially with respect to the number of nodes in the influence diagram. The main reason for this is the exponential growth of the number of path variables as a function of the number of chance and decision nodes. The experiments in \citep{hankimaa2023solving} also demonstrate that the LP-relaxation of \eqref{eq:dp_objective}-\eqref{eq:dp_path_compat} is weak, which significantly reduces the efficiency of MILP solvers. This motivates the development of an alternative formulation for \eqref{eq:dp_objective}-\eqref{eq:dp_path_compat} with fewer variables and a tighter formulation.

Let $C_I = \{c \in C \mid \exists d \in D \text{ s.t. } c \in I(d)\}$ and let $O = D \cup C_I$. Thus, the set $O$ contains decision nodes and chance nodes that the decision maker observes at some point in the decision process. We call the set $O$ as \emph{observation set}, and a path segment $s_O$ consisting of the states of nodes from the observation set an \emph{observable segment}. Our reformulation is based on the following observation concerning the optimal values for path variables in \eqref{eq:dp_objective}-\eqref{eq:dp_path_compat}. The observable segment $s_O$ determines the values of path variables in the sense that if $x(s) = 1$ for some $s \in S^{>}$, then $x(s') = 1, \forall s' \in E^{>}(s_{O})$. Essentially, path variables for paths with identical observation segments share the same value at the optimal solution. This is due to the fact that $s_O$ contains all the states required to evaluate decision strategies, which ultimately determine whether $x(s) = 0$ or $x(s) = 1$. We provide a theoretical justification and demonstrate how this observation can be used to formulate an improved MILP model. Proposition \ref{prop:suboptimal_solution} confirms the validity of our observation.
\begin{proposition}
\label{prop:suboptimal_solution}
    Let $(x,z)$ be a feasible solution for the constraint set \eqref{eq:dp_strategy1}-\eqref{eq:dp_path_compat}. Let $x^{*}$ be such that $x^{*}(s) = 1$ if $\max_{s' \in E^{>}(s_O)}x(s') > 0$ and $x^{*}(s) = 0$ otherwise. Then, $(x^{*},z)$ is feasible for the constraint set \eqref{eq:dp_strategy1}-\eqref{eq:dp_path_compat}. Moreover, if $\exists s \in S^{>}$ such that $x(s) \neq x^{*}(s)$, then $\sum_{s \in S^{>}}p(s)U(s)x(s) < \sum_{s \in S^{>}}p(s)U(s)x^{*}(s)$.
\end{proposition}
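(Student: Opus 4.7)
The plan is to first establish a structural observation that the observable segment $s_O$ determines all decision-relevant states of a path, then verify feasibility of $(x^*, z)$ constraint-by-constraint, and finally derive the strict objective improvement from the positivity of probabilities and utilities. The key structural fact is that every decision node and its information set is contained in the observation set: by definition $D \subseteq O$, and for each $d \in D$ the chance nodes in $I(d)$ lie in $C_I \subseteq O$ while the decision nodes in $I(d)$ lie in $D \subseteq O$. Consequently, whenever $s, s' \in S^{>}$ satisfy $s_O = s'_O$, we have $s_d = s'_d$ and $s_{I(d)} = s'_{I(d)}$ for every $d \in D$, so $s' \in E^{>}(s_d, s_{I(d)})$ whenever $s \in E^{>}(s_d, s_{I(d)})$.

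For feasibility, constraints \eqref{eq:dp_strategy1} and \eqref{eq:dp_strategy} are unaffected since $z$ is unchanged, and $x^*(s) \in \{0,1\} \subseteq [0,1]$ by construction, satisfying \eqref{eq:dp_path_compat}. The nontrivial constraint is \eqref{eq:dp_path_compat_strategy}. Fix $d \in D$, $s_d \in S_d$, and $s_{I(d)} \in S_{I(d)}$. If $z(s_d \mid s_{I(d)}) = 0$, I would argue $x^*(s) = 0$ for every $s \in E^{>}(s_d, s_{I(d)})$: were $x^*(s) = 1$ for some such $s$, the definition of $x^*$ would supply an $s' \in E^{>}(s_O)$ with $x(s') > 0$, and the structural observation would place $s'$ in $E^{>}(s_d, s_{I(d)})$ too, contradicting feasibility of the original $(x, z)$ under \eqref{eq:dp_path_compat_strategy}. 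If instead $z(s_d \mid s_{I(d)}) = 1$, then $\sum_{s \in E^{>}(s_d, s_{I(d)})} x^*(s) \leq |E^{>}(s_d, s_{I(d)})| \leq \Gamma(s_d \mid s_{I(d)})$ by the definition of $\Gamma$, so the constraint holds.

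For the strict improvement, I would show $x^*(s) \geq x(s)$ pointwise on $S^{>}$: if $x(s) > 0$ then $s \in E^{>}(s_O)$ already witnesses $\max_{s' \in E^{>}(s_O)} x(s') > 0$, forcing $x^*(s) = 1 \geq x(s)$; the case $x(s) = 0$ is immediate. Since $p(s) > 0$ for $s \in S^{>}$ and $U(s) > 0$ by the paper's standing positivity assumption on utilities, each term $p(s) U(s)[x^*(s) - x(s)]$ is non-negative, and is strictly positive at any $s$ where $x^*(s) \neq x(s)$, which yields the strict inequality on the objective. The main obstacle is the zero-case of constraint \eqref{eq:dp_path_compat_strategy}, which is exactly where the structural observation about $s_O$ does the real work; the remaining steps are bookkeeping, including the fact that $x^*$ is binary-valued and therefore automatically respects the box constraints.
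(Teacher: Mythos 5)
Your overall route coincides with the paper's: the structural observation that $D \subseteq O$ and $I(d) \subseteq O$ for every $d \in D$ (so that the observable segment $s_O$ determines $s_d$ and $s_{I(d)}$ for all decision nodes), the constraint-by-constraint feasibility check, the pointwise domination $x^{*}(s) \geq x(s)$, and the strict-positivity argument $p(s)U(s) > 0$ on $S^{>}$ for the objective are exactly the ingredients of the paper's proof. Your handling of the case $z(s_d \mid s_{I(d)}) = 0$ is correct and, if anything, more explicit than the paper's per-path phrasing.

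One step, however, is wrong as written. In the case $z(s_d \mid s_{I(d)}) = 1$ you bound the left-hand side of \eqref{eq:dp_path_compat_strategy} by claiming $|E^{>}(s_d, s_{I(d)})| \leq \Gamma(s_d \mid s_{I(d)})$ ``by the definition of $\Gamma$''. The definition goes the other way: $\Gamma(s_d \mid s_{I(d)})$ is the \emph{minimum} of $|E^{>}(s_d, s_{I(d)})|$ and $|E(s_d,s_{I(d)})| / \prod_{k \in D \setminus (\{d\} \cup I(d))}|S_k|$, so $\Gamma(s_d \mid s_{I(d)}) \leq |E^{>}(s_d, s_{I(d)})|$, and the inequality you invoke can fail strictly whenever the second term is the smaller one. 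The conclusion still holds, but it needs the counting argument behind that second term: by your own reasoning, every $s \in E^{>}(s_d, s_{I(d)})$ with $x^{*}(s) = 1$ satisfies $z(s_{d'} \mid s_{I(d')}) = 1$ for all $d' \in D$; since $z$ selects exactly one alternative per information state and the diagram is acyclic, each fixed assignment of the non-decision coordinates admits at most one compatible assignment of the coordinates in $D \setminus (\{d\} \cup I(d))$, so the number of such paths is at most $|E(s_d,s_{I(d)})| / \prod_{k \in D \setminus (\{d\} \cup I(d))}|S_k|$. Being also trivially at most $|E^{>}(s_d, s_{I(d)})|$, it is at most the minimum of the two, i.e.\ at most $\Gamma(s_d \mid s_{I(d)})$, which is what the constraint requires. (The paper's own proof leaves this counting step implicit, treating $\Gamma$ as a valid big-M for any strategy-consistent $x$; your version makes the gap visible by asserting a false inequality in its place.)
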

\begin{proof}
See Appendix \ref{sec:proofs}.
\end{proof}

As a corollary for Proposition \ref{prop:suboptimal_solution}, any feasible solution $(x,z)$ for \eqref{eq:dp_strategy1}-\eqref{eq:dp_path_compat} for which $\exists s, s' \in S^{>}$ such that $s_O = s'_O$ and $x(s) \neq x(s')$ is suboptimal. We define \emph{observation variables} $y(s_{O}) \in \{0,1\}, \forall s_{O} \in S_{O}$, with the following relationship to path variables:
\begin{equation}
\label{eq:reduced}
    y(s_{O}) = 1 \Leftrightarrow x(s) = 1, \forall s \in E^{>}(s_{O}).
\end{equation}
Effectively, the observation variables $y(s_O)$ aggregate the path variables $x(s)$, for $s \in E^>(s_O)$ from \eqref{eq:dp_objective}-\eqref{eq:dp_path_compat}. Consequently, when compared to path variables in \eqref{eq:dp_objective}-\eqref{eq:dp_path_compat}, observation variables exclude solutions such that $x(s) \neq x(s')$, where $s, s' \in E^{>}(s_{O}),$ for some $s_O \in S_O$.

Let $\mathbb{E}_{U}(s_{O}) = \sum_{s \in E(s_{O})}p(s)U(s)$ be the expected utility associated with an observable segment $s_O \in S_O$. Additionally, let $E_O(s_M) := \{s'_O \in S_O \mid s'_M = s_M\}$ be the \emph{observable extension} for an observable segment $M \subseteq O$. The expected utility of the decision strategy can be represented with the observation variables as $\sum_{s_{O} \in S_{O}}\mathbb{E}_{U}(s_{O})y(s_{O})$. Proposition \ref{prop:objectives_match} states that $\sum_{s_{O} \in S_{O}}\mathbb{E}_{U}(s_{O})y(s_{O})$ is equal to the expected utility \eqref{eq:eu} using the path variables, as defined in \eqref{eq:reduced}. 

\begin{proposition}
\label{prop:objectives_match}
    Let $y$ be such that $y(s_O) \in \{0,1\}, \forall s_O \in S_O$, and let $x$ be defined as in \eqref{eq:reduced} based on $y$. Then, $\sum_{s \in S_O}\mathbb{E}_{U}(s_O)y(s_O) = \sum_{s \in S^{>}}p(s)U(s)x(s) $.
\end{proposition}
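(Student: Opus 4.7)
The plan is to reduce the identity to a double-counting argument by partitioning the set $S^{>}$ according to observable segments. Each path $s \in S$ induces a unique observable segment $s_O \in S_O$ (the projection onto the nodes in $O$), so the family $\{E^{>}(s_O)\}_{s_O \in S_O}$ is a partition of $S^{>}$. This turns the sum on the right-hand side into a double sum indexed first by $s_O \in S_O$ and then by $s \in E^{>}(s_O)$.

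Next, I would use the defining relation \eqref{eq:reduced}: for any $s \in E^{>}(s_O)$, one has $x(s) = y(s_O)$. Hence $x(s)$ can be factored out of the inner sum, yielding
\begin{equation*}
\sum_{s \in S^{>}} p(s) U(s) x(s) \;=\; \sum_{s_O \in S_O} y(s_O) \sum_{s \in E^{>}(s_O)} p(s) U(s).
\end{equation*}
The remaining step is to observe that the inner sum on the right equals $\mathbb{E}_U(s_O)$. Indeed, $E^{>}(s_O) = E(s_O) \cap S^{>}$, so extending the sum back to $E(s_O)$ only adds terms with $p(s) = 0$, which contribute nothing. Thus $\sum_{s \in E^{>}(s_O)} p(s) U(s) = \sum_{s \in E(s_O)} p(s) U(s) = \mathbb{E}_U(s_O)$, and substituting gives the claimed equality.

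There is no real obstacle here; the statement is essentially bookkeeping once the correct partition is recognized. The only point that requires a moment of care is making explicit that the disjoint-union decomposition $S^{>} = \bigsqcup_{s_O \in S_O} E^{>}(s_O)$ is legitimate (each path projects to exactly one observable segment) and that enlarging $E^{>}(s_O)$ to $E(s_O)$ is harmless because the discarded paths have zero probability. Both of these are immediate from the definitions of $E(\cdot)$, $E^{>}(\cdot)$, and $S^{>}$.
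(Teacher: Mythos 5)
Your proof is correct and is essentially the paper's own argument run in the reverse direction: the paper starts from $\sum_{s_O}\mathbb{E}_U(s_O)y(s_O)$, expands $\mathbb{E}_U(s_O)$, discards the zero-probability terms, and substitutes $x(s)=y(s_O)$, while you start from the path-variable sum and partition $S^{>}$ by observable segments before extending back to $E(s_O)$. The ingredients (the disjoint decomposition $S^{>}=\bigsqcup_{s_O}E^{>}(s_O)$, the identification $x(s)=y(s_O)$ from \eqref{eq:reduced}, and the harmlessness of the zero-probability paths) are identical, so there is nothing to add.
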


\begin{proof}
See Appendix \ref{sec:proofs}.
\end{proof}

Using the results from Proposition \ref{prop:objectives_match}, we can pose the following equivalent formulation.  
\begin{align}
    \max_{z,y} &\sum_{s_{O} \in S_{O}}\mathbb{E}_{U}(s_{O})y(s_{O}) \label{eq:dpr_objective}\\
    \text{s.t.} &\sum_{s_d \in S_{d}}z(s_d \mid s_{I(d)}) = 1,  ~&\forall d \in D, s_{I(d)} \in S_{I(d)}  \label{eq:dpr_strategy1}\\ 
    & \sum_{s_{O} \in E_{O}(s_d,s_{I(d)})} y(s_{O}) \leq \Gamma(s_d \mid s_{I(d)}) z(s_d \mid s_{I(d)}), ~&\forall d \in D, s_d \in S_d, s_{I(d)} \in S_{I(d)} \label{eq:dpr_path_compat_strategy}\\
    & z(s_d \mid s_{I(d)}) \in \{0,1\},
     ~&\forall d \in D, s_d \in S_d, s_{I(d)} \in S_{I(d)}\label{eq:dpr_strategy} \\
    & y(s_{O}) \in [0,1], ~&\forall s_{O} \in S_{O}. \label{eq:dpr_path_compat}
\end{align}
In particular, the optimal solution to \eqref{eq:dpr_objective}-\eqref{eq:dpr_path_compat} gives the decision strategy that maximizes expected utility. We note that Proposition \ref{prop:objectives_match} provides the equivalence for binary $y$ whereas the model \eqref{eq:dpr_objective}-\eqref{eq:dpr_path_compat} allows $y$ to be a continuous variable on the interval $[0,1]$. However, with similar argumentation as in Section \ref{sec:dec_proc}, we conclude that $y(s_o) \in [0,1]$ attains the optimum value from the set $y(s_o) \in \{0,1\}, \forall s_O \in S_O$, so that the use of continuous $y(s_O)$ is warranted. In addition, because the number of observation variables cannot exceed the number of path variables, the coefficient $\Gamma(s_d \mid s_{I(d)})$ remains a valid big-M value in Constraint \eqref{eq:dpr_path_compat_strategy}. With Propositions \ref{prop:equivalence} and \ref{prop:optimality}, we show that the optimal value of \eqref{eq:dp_objective}-\eqref{eq:dp_path_compat} can be attained using the formulation \eqref{eq:dpr_objective}-\eqref{eq:dpr_path_compat}.

\begin{proposition}
    \label{prop:equivalence}
    Let ($y$, $z$) be a solution that fulfills the constraint set \eqref{eq:dpr_strategy1}-\eqref{eq:dpr_strategy} and \eqref{eq:dpr_path_compat} holds tightly (i.e. $y(s_O) \in \{0,1\}, \forall s_O \in S_O$). Let $x^{*}(s) = y^{*}(s_O), \forall s \in S^{>}$. Then, $(x,z)$ fulfills the constraint set \eqref{eq:dp_strategy1}-\eqref{eq:dp_path_compat}.
\end{proposition}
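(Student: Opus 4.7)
The plan is to verify each constraint in \eqref{eq:dp_strategy1}-\eqref{eq:dp_path_compat} under the hypotheses on $(y,z)$ and the definition $x(s) = y(s_O)$. Three of the four carry over with essentially no work: \eqref{eq:dp_strategy1} and \eqref{eq:dp_strategy} coincide literally with \eqref{eq:dpr_strategy1} and \eqref{eq:dpr_strategy}, both of which hold for $z$ by hypothesis; and \eqref{eq:dp_path_compat} follows because $x(s) = y(s_O) \in \{0,1\} \subseteq [0,1]$. All of the effort therefore concentrates on the big-M constraint \eqref{eq:dp_path_compat_strategy}.

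Before the case analysis, I would record the structural fact that $\{d\} \cup I(d) \subseteq O$ for every $d \in D$: we have $D \subseteq O$ by the definition $O = D \cup C_I$, and every chance parent of $d$ lies in $C_I$ by the definition of $C_I$. Consequently, the projection of any $s \in E^{>}(s_d, s_{I(d)})$ onto $O$ yields an observable segment $s_O \in E_O(s_d, s_{I(d)})$. This immediately handles the case $z(s_d \mid s_{I(d)}) = 0$: the reformulated constraint \eqref{eq:dpr_path_compat_strategy} forces $y(s_O) = 0$ on all of $E_O(s_d, s_{I(d)})$, and the projection fact then propagates $x(s) = y(s_O) = 0$ to every $s \in E^{>}(s_d, s_{I(d)})$, so the left-hand side of \eqref{eq:dp_path_compat_strategy} vanishes.

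The substantive case is $z(s_d \mid s_{I(d)}) = 1$, where I must establish $\sum_{s \in E^{>}(s_d, s_{I(d)})} x(s) \leq \Gamma(s_d \mid s_{I(d)})$. I would first prove the auxiliary claim that $y(s_O) = 1$ forces $z(s_{d'} \mid s_{I(d')}) = 1$ for every $d' \in D$: applying \eqref{eq:dpr_path_compat_strategy} at $d'$ and using $s_O \in E_O(s_{d'}, s_{I(d')})$ from the projection fact gives $1 \leq \Gamma(s_{d'} \mid s_{I(d')}) z(s_{d'} \mid s_{I(d')})$, and since $\Gamma$ is a nonnegative integer and $z$ is binary, $z(s_{d'} \mid s_{I(d')}) = 1$. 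Hence every path $s$ contributing to the left-hand side is fully consistent with the strategy $z$, and the remaining task is to count such paths inside $E^{>}(s_d, s_{I(d)})$. Two bounds are available: trivially there are at most $|E^{>}(s_d, s_{I(d)})|$ such paths, and by processing nodes in a topological order of the acyclic diagram, each decision state $s_{d'}$ on a $z$-consistent path is uniquely determined by its information state $s_{I(d')}$, so fixing the chance states in $C \setminus I(d)$ pins down the whole path and yields at most $\prod_{c \in C \setminus I(d)} |S_c| = |E(s_d, s_{I(d)})| / \prod_{k \in D \setminus (\{d\} \cup I(d))} |S_k|$ candidates; taking the minimum of the two bounds recovers $\Gamma(s_d \mid s_{I(d)})$ exactly. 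The main obstacle I anticipate is this inductive counting: one must be careful that at each decision node $d'$ the information state $s_{I(d')}$ has already been determined by earlier nodes in the topological order, which is precisely where acyclicity of $G$ is essential.
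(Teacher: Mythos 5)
Your proof is correct and follows essentially the same route as the paper's: the first three constraints transfer immediately, and the big-M constraint is handled by observing that $\{d\}\cup I(d)\subseteq O$ and splitting into cases (the paper splits on the value of $y(s_O)$ rather than on $z(s_d\mid s_{I(d)})$, which is the same case analysis in dual form). The only difference is that where the paper simply invokes the fact that $\Gamma(s_d\mid s_{I(d)})$ is a valid big-M constant for \eqref{eq:dp_objective}--\eqref{eq:dp_path_compat}, delegating the counting to the cited derivation of $\Gamma$ in \citet{hankimaa2023solving}, you re-derive that bound in full via the auxiliary claim that $y(s_O)=1$ forces strategy-consistency at every decision node and the topological-order counting argument --- a more self-contained rendering of the same step, not a different proof.
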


\begin{proof}
See Appendix \ref{sec:proofs}.
\end{proof}

\begin{proposition}
\label{prop:optimality}
Assume that ($y^{*}$, $z^{*}$) is the optimal solution for \eqref{eq:dpr_objective}-\eqref{eq:dpr_path_compat}. Let $x^{*}(s) = y^{*}(s_O), \forall s \in S^{>}$. Then, $(x^{*},z^{*})$ is the optimal solution for \eqref{eq:dp_objective}-\eqref{eq:dp_path_compat}.
\end{proposition}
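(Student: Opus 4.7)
The plan is to prove Proposition \ref{prop:optimality} by combining Propositions \ref{prop:suboptimal_solution}, \ref{prop:objectives_match}, and \ref{prop:equivalence}, and then closing the argument with a contradiction. First, I would note that by the same reasoning used at the end of Section \ref{sec:dec_proc} for the original path variables (the maximization together with $U^{>}(s) > 0$ and the integer/zero nature of the right-hand side of \eqref{eq:dpr_path_compat_strategy}), we may assume the optimizer $y^{*}$ is binary-valued, i.e.\ $y^{*}(s_O) \in \{0,1\}$ for every $s_O \in S_O$. Then Proposition \ref{prop:equivalence} immediately yields that $(x^{*}, z^{*})$, with $x^{*}(s) = y^{*}(s_O)$, is feasible for the constraint set \eqref{eq:dp_strategy1}-\eqref{eq:dp_path_compat}.

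Next, I would match the objective values. By construction, $x^{*}(s) = y^{*}(s_O)$ is of exactly the form covered by Proposition \ref{prop:objectives_match}, so
\begin{equation*}
\sum_{s \in S^{>}} p(s) U(s) x^{*}(s) \;=\; \sum_{s_O \in S_O} \mathbb{E}_U(s_O) y^{*}(s_O),
\end{equation*}
i.e., the objective value of $(x^{*}, z^{*})$ in \eqref{eq:dp_objective} equals the optimal value of \eqref{eq:dpr_objective}-\eqref{eq:dpr_path_compat}.

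To conclude optimality, I would argue by contradiction. Suppose there exists a feasible $(\bar{x}, \bar{z})$ for \eqref{eq:dp_strategy1}-\eqref{eq:dp_path_compat} whose objective value strictly exceeds that of $(x^{*}, z^{*})$. Proposition \ref{prop:suboptimal_solution} lets me replace $\bar{x}$ by its observable-segment collapse $\bar{x}^{\text{c}}$, which is still feasible and has objective value no smaller than $\bar{x}$'s; in particular $\bar{x}^{\text{c}}(s) = \bar{x}^{\text{c}}(s')$ whenever $s_O = s'_O$. I would then define $\bar{y}(s_O)$ to be the common value of $\bar{x}^{\text{c}}$ on $E^{>}(s_O)$ (and $0$ if $E^{>}(s_O) = \emptyset$), and verify that $(\bar{y}, \bar{z})$ satisfies \eqref{eq:dpr_strategy1}-\eqref{eq:dpr_path_compat}: constraint \eqref{eq:dpr_strategy1} carries over unchanged, while for \eqref{eq:dpr_path_compat_strategy} I would use the identity $\sum_{s_O \in E_O(s_d, s_{I(d)})} \bar{y}(s_O) \le \sum_{s \in E^{>}(s_d, s_{I(d)})} \bar{x}^{\text{c}}(s)$, which follows since each $\bar{y}(s_O)$ equals $\bar{x}^{\text{c}}$ on a nonempty fiber inside $E^{>}(s_d, s_{I(d)})$. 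Proposition \ref{prop:objectives_match} then gives $\sum_{s_O} \mathbb{E}_U(s_O) \bar{y}(s_O) = \sum_{s} p(s) U(s) \bar{x}^{\text{c}}(s)$, which strictly exceeds the optimal value of \eqref{eq:dpr_objective}-\eqref{eq:dpr_path_compat}, contradicting the assumed optimality of $(y^{*}, z^{*})$.

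The main obstacle I anticipate is the careful verification that the induced solution $(\bar{y}, \bar{z})$ actually satisfies \eqref{eq:dpr_path_compat_strategy} rather than just passing the objective check — one must argue precisely how the aggregation from paths to observable segments interacts with the big-$M$ coefficient $\Gamma(s_d \mid s_{I(d)})$, which is already known to be a valid upper bound for the reformulation (as noted just before Proposition \ref{prop:equivalence} in the text). Once that bookkeeping is in place, the three already-established propositions knit together cleanly to deliver the result.
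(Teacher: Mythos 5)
Your proposal is correct and follows essentially the same route as the paper's proof: feasibility of $(x^{*},z^{*})$ via Proposition \ref{prop:equivalence}, objective matching via Proposition \ref{prop:objectives_match}, and the observable-segment collapse of an arbitrary feasible $(\bar{x},\bar{z})$ (the paper uses $x'(s)=\max_{s'\in E(s_O)}x(s')$ in a direct inequality chain rather than your contradiction). If anything, your explicit verification that the induced $(\bar{y},\bar{z})$ satisfies \eqref{eq:dpr_path_compat_strategy} is a point the paper leaves implicit when it invokes the optimality of $y^{*}$.
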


\begin{proof}
   See Appendix \ref{sec:proofs}.
    \end{proof}

Furthermore, we can strengthen formulation \eqref{eq:dpr_objective}-\eqref{eq:dpr_path_compat} with a valid inequality stating that for each path segment based on all chance nodes $s_C$, there exists at most one path $s$ containing $s_C$ such that $x(s) = 1$. In \eqref{eq:dpr_objective}-\eqref{eq:dpr_path_compat}, the equivalent statement is that for each $s_{C_I}$, there exists at most one $s_O$, containing $s_{C_I}$ such that $y(s_O) = 1$. Proposition \ref{prop:valid_inequality} states how this can be used to generate a valid inequality for \eqref{eq:dpr_objective}-\eqref{eq:dpr_path_compat}.

\begin{proposition}
\label{prop:valid_inequality}
    Assume that $(y, z)$ is a feasible for the constraint set \eqref{eq:dpr_strategy1}-\eqref{eq:dpr_path_compat}. Then, $\sum_{s_{O} \in E_{O}(s_{C_I})}y(s_{O}) \leq 1, \forall s_{C_I} \in S_{C_I}$.
\end{proposition}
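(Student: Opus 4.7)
The plan is to leverage the fact that $z(s_d\mid s_{I(d)})\in\{0,1\}$ together with the selection constraint \eqref{eq:dpr_strategy1} to show that, for any fixed $s_{C_I}\in S_{C_I}$, the sum $\sum_{s_O\in E_O(s_{C_I})}y(s_O)$ collapses to a single surviving $y$-value, which is then bounded above by $1$ through \eqref{eq:dpr_path_compat}. I would not try to sum constraints \eqref{eq:dpr_path_compat_strategy} directly across decision nodes, because the big-M coefficients $\Gamma(s_d\mid s_{I(d)})$ are too loose for that aggregation to yield a bound of $1$; instead, I would exploit the fact that $\Gamma(s_d\mid s_{I(d)})\cdot z(s_d\mid s_{I(d)})$ is zero as soon as $z=0$, and then use the equality constraint to guarantee that all but one decision alternative is killed.

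To set this up, I would order the decision nodes $d_1,\ldots,d_m$ according to a topological ordering consistent with the DAG structure of the influence diagram, so that $I(d_j)\subseteq\{d_1,\ldots,d_{j-1}\}\cup C_I$ for every $j$. Consequently, once $s_{C_I}$ and $s_{d_1},\ldots,s_{d_{j-1}}$ are specified, the information state $s_{I(d_j)}$ is uniquely determined as a subvector of $(s_{C_I},s_{d_1},\ldots,s_{d_{j-1}})$.

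I would then proceed by induction on $j=1,\ldots,m$. At step $j$, having fixed $s_{d_1}^*,\ldots,s_{d_{j-1}}^*$, the corresponding $s_{I(d_j)}$ is determined; by \eqref{eq:dpr_strategy1}, exactly one $s_{d_j}^*\in S_{d_j}$ has $z(s_{d_j}^*\mid s_{I(d_j)})=1$, while $z(s_{d_j}\mid s_{I(d_j)})=0$ for every other $s_{d_j}$. For those other values, the right-hand side of \eqref{eq:dpr_path_compat_strategy} is $0$, and since $y\ge 0$ by \eqref{eq:dpr_path_compat}, this forces $y(s_O)=0$ for every $s_O\in E_O(s_{d_j},s_{I(d_j)})$, which in particular contains every $s_O\in E_O(s_{C_I},s_{d_1}^*,\ldots,s_{d_{j-1}}^*,s_{d_j})$. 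Hence only the $s_{d_j}^*$-branch can carry nonzero mass.

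Iterating over all $m$ decision nodes, the original sum reduces to the single term $y(s_O^*)$, where $s_O^*=(s_{C_I},s_{d_1}^*,\ldots,s_{d_m}^*)$, and \eqref{eq:dpr_path_compat} then gives $y(s_O^*)\le 1$. The main obstacle I anticipate is purely notational: cleanly expressing the nested restriction of the sum over $E_O(s_{C_I})$ as the decisions are successively pinned down, and making sure that the topological ordering argument is invoked correctly at each step so that $s_{I(d_j)}$ is always available from the previously fixed coordinates. The underlying mechanism, however, is a straightforward sequential elimination driven entirely by \eqref{eq:dpr_strategy1} and \eqref{eq:dpr_path_compat_strategy}.
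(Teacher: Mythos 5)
Your proof is correct and uses essentially the same mechanism as the paper's: constraints \eqref{eq:dpr_strategy1} and \eqref{eq:dpr_path_compat_strategy}, combined with acyclicity of the diagram, force at most one term of the sum over $E_O(s_{C_I})$ to be nonzero, and \eqref{eq:dpr_path_compat} bounds that surviving term by $1$. The only difference is organizational---you run a forward induction along a topological order of the decision nodes, whereas the paper fixes two distinct observable segments and recurses backward through decision parents until it reaches a differing decision node with no decision-node parents among the differing ones; your formulation also handles the continuous domain $y(s_O)\in[0,1]$ slightly more explicitly than the paper's ``assume $y(s_O)=1$'' phrasing.
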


\begin{proof}
 See Appendix \ref{sec:proofs}.
\end{proof}

Therefore, $\sum_{s_{O} \in E_{O}(s_{C_I})}y(s_{O}) \leq 1, \forall s_{C_I} \in S_{C_I}$, can be added to \eqref{eq:dpr_objective}-\eqref{eq:dpr_path_compat} to improve its LP relaxation. The final form of our proposed formulation is given in \eqref{eq:dpr2_objective}-\eqref{eq:dpr2_path_compat}.
\begin{align}
    \max_{z,y} &\sum_{s_{O} \in S_{O}}\mathbb{E}_{U}(s_{O})y(s_{O}) \label{eq:dpr2_objective}\\
    \text{s.t.} &\sum_{s_d \in S_{d}}z(s_d \mid s_{I(d)}) = 1,  ~&\forall d \in D, s_{I(d)} \in S_{I(d)}  \label{eq:dpr2_strategy1}\\ 
    & \sum_{s_{O} \in E_{O}(s_d,s_{I(d)})} y(s_{O}) \leq \Gamma(s_d \mid s_{I(d)}) z(s_d \mid s_{I(d)}), ~&\forall d \in D, s_d \in S_d, s_{I(d)} \in S_{I(d)} \label{eq:dpr2_path_compat_strategy}\\
    & \sum_{s_{O} \in E_{O}(s_{C_I})} y(s_{O}) \leq 1, ~&\forall s_{C_I} \in S_{C_I} \label{eq:dpr2_valid_ineq}\\
    & z(s_d \mid s_{I(d)}) \in \{0,1\},
     ~&\forall d \in N^{d}, s_d \in S_d, s_{I(d)} \in S_{I(d)}\label{eq:dpr2_strategy} \\
    & y(s_{O}) \in [0,1], ~&\forall s_{O} \in S_{O}. \label{eq:dpr2_path_compat}
\end{align}

\begin{rmk}
    The formulation \eqref{eq:dpr2_objective}-\eqref{eq:dpr2_path_compat} can be preprocessed similarly as \eqref{eq:dp_objective}-\eqref{eq:dp_path_compat} by defining observation variables only for the observable segments, whose expected utility contribution is strictly greater than zero as $S^{>}_O = \{s_O \in S_O \mid \mathbb{E}_{U}(s_O) > 0\}$.
\end{rmk}

\subsection{Alternative objective functions and risk constraints}

The original formulation \eqref{eq:dp_objective}-\eqref{eq:dp_path_compat} can be easily adapted to consider alternative risk preferences to optimize risk measures like the CVaR, which is the expected value of consequences in the worst-case $\alpha$-tail of the distribution, with $\alpha$ being a probability threshold parameter \citep{rockafellar2000optimization}. Similarly, enforcing chance constraints \citep{charnes1959chance} or logical constraints is relatively straightforward. Next, we describe how the optimization model \eqref{eq:dpr2_objective}-\eqref{eq:dpr2_path_compat} can, with a suitable reformulation, be generalized to include CVaR in the objective function and to account for risk-related constraints. 

\subsubsection{Conditional value-at-risk}

The CVaR formulation presented in \citet{salo2022decision} takes advantage of the fact that the probability distribution over consequences can be represented as a function the decision variables. For instance, in \eqref{eq:dp_objective}-\eqref{eq:dp_path_compat}, this is relatively straightforward as each path $s \in S$ is associated with a utility $U(s)$. Thus, the probability of attaining utility value $u$ can be represented as $q(u) = \sum_{\{s \in S \mid U(s) = u\}}p(s)x(s)$. 

In order to represent the probability distribution of consequences as variables in \eqref{eq:dpr2_objective}-\eqref{eq:dpr2_path_compat}, we define
the \emph{utility extension} $E^u_U(s_O) = \{s \in E(s_O) \mid U(s) = u\}$. The utility extension for the observable segment $s_O$ is the set of all paths containing $s_O$ and for which the utility is $u$. The probability of attaining the utility $u$ is 
\begin{equation*}
q(u) = \sum_{s_O \in S_O}y(s_O)\left(\sum_{s \in E^u_U(s_O)}p(s)\right).
\end{equation*}
Additionally, let us define the set $U = \{u \in \mathbb{R} \mid \exists s \in S \text{ s.t. } U(s) = u\}$. Analogous to \cite{salo2022decision}, we pose constraints \eqref{eq:cvar-1}-\eqref{eq:cvar-11}. 
\begin{align}
    &\eta - u \le M \lambda(u), \ & \forall u \in U \label{eq:cvar-1}\\ 
    &\eta - u \ge (M+\epsilon) \lambda(u) - M, \ & \forall u \in U \label{eq:cvar-2}\\ 
    &\eta - u \le (M+\epsilon)\overline{\lambda}(u) - \epsilon, \ & \forall u \in U \label{eq:cvar-3}\\ 
    &\eta - u \ge M (\overline{\lambda}(u)-1), \ & \forall u \in U \label{eq:cvar-4}\\ 
    &\overline{\rho}(u) \le \overline{\lambda}(u), \ & \forall u \in U \label{eq:cvar-5}\\
    &q(u) - (1-\lambda(u)) \le \rho(u) \le \lambda(u) , \ & \forall u \in U \label{eq:cvar-6}\\
    &\rho(u) \le \overline{\rho}(u) \le q(u), \ & \forall u \in U \label{eq:cvar-7}\\ 
    &\sum_{u \in U} \overline{\rho}(u) = \alpha \label{eq:cvar-8}\\
    &\lambda(u), \overline{\lambda}(u) \in \{0,1\}, \ & \forall u \in U \label{eq:cvar-9}\\ 
    &\rho(u), \overline{\rho}(u) \in [0,1], \ & \forall u \in U \label{eq:cvar-10}\\
    &\eta \in \mathbb{R}. \label{eq:cvar-11}
\end{align}

In \eqref{eq:cvar-1}-\eqref{eq:cvar-11}, $\alpha$ is the probability threshold parameter and $\epsilon$ is a small constant used to model strict inequalities. \citet{salo2022decision} use $\epsilon = \frac{1}{2}\min\{|u_1 - u_2| : u_1, u_2 \in U, u_1 \neq u_2\}$, which is the smallest positive utility difference between two paths, and $M$ is a big-M value. For a more rigorous explanation of \eqref{eq:cvar-1}-\eqref{eq:cvar-11}, see \citep{salo2022decision,herrala2025risk}.

If Constraints \eqref{eq:cvar-1}-\eqref{eq:cvar-11} are added to \eqref{eq:dpr2_objective}-\eqref{eq:dpr2_path_compat}, then CVaR is 
\begin{equation}
\label{eq:cvar}    
\frac{1}{\alpha}\sum_{u \in U}\overline{\rho}(u)u.
\end{equation}
This expression can be used as an alternative to the expected utility or as a constraint in the optimization model. Specifically, maximizing \eqref{eq:cvar} will maximize the expected consequences in the worst $\alpha$-tail of the distribution, which is equivalent to minimizing the risks associated with the worst $\alpha$-tail of the consequence distribution. 

The CVaR formulation \eqref{eq:cvar-1}-\eqref{eq:cvar-11} is based on the assumption that 
\begin{equation}
\label{eq:condition_for_cvar}
        z(s_d \mid s_{I(d)}) = 1, \forall d \in D \Leftrightarrow y(s_O) = 1, \forall s_O \in S_O
\end{equation}
holds. That is, we assume that if all decisions in an observable segment $s_O \in S_O$ are compatible with the decision strategy, then $y(s_O) = 1$. In \eqref{eq:dpr2_objective}-\eqref{eq:dpr2_path_compat}, this assumption is fulfilled implicitly by maximizing the expected utility. However, this assumption does not hold for more general objective functions. To see why, consider that in \eqref{eq:cvar-1}-\eqref{eq:cvar-11}, the variable $\overline{\rho}(u)$ represents the \emph{worst-case} $\alpha$-tail distribution of the consequences. This is done by enforcing a lower bound on $\overline{\rho}(u)$ in Constraints \eqref{eq:cvar-7} using another variable $\rho(u)$, which is bounded from below with expression $q(u)$, which in turn depends on $y(s_O)$. As we seek a solution where Constraint \eqref{eq:cvar-8} holds, the lower bound $q(u)$ forces $\overline{\rho}(u)$ to attain values that are strictly positive in the worst-case tail of the probability distribution of the attainable utilities. In terms of the objective function \eqref{eq:cvar}, a better solution would be to set the lower bounds to zero for small utility values and seek to exhaust Constraint \eqref{eq:cvar-8} by using paths with large utilities. Therefore, if \eqref{eq:condition_for_cvar} is not enforced, the model can simply set $y(s_O) = 0$ for compatible observable segments with small utility values, which sets the lower bound in Constraint \eqref{eq:cvar-6} to zero and allows $\overline{\rho}(u)$ to seek nonzero values for large utilities. If \eqref{eq:condition_for_cvar} holds, then the lower bounds work as intended and $\overline{\rho}(u)$ represents the worst-case $\alpha$-tail distribution. Therefore, \eqref{eq:condition_for_cvar} is enforced via a constraint. We do this by enforcing that Constraints \eqref{eq:dpr2_valid_ineq} hold as an equality.  Proposition \ref{prop:condition_holds} shows that enforcing Constraints \eqref{eq:dpr2_valid_ineq} as equalities enforces that condition \eqref{eq:condition_for_cvar} holds.

\begin{proposition}
\label{prop:condition_holds}
    Assume that $(y,z)$ is a feasible solution for \eqref{eq:dpr2_objective}-\eqref{eq:dpr2_path_compat} where \eqref{eq:dpr2_valid_ineq} holds with equality. Then, \eqref{eq:condition_for_cvar} holds.
\end{proposition}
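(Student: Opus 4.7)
The plan is to fix an observable segment $s_O$ that is compatible with $z$ (meaning $z(s_{O,d}\mid s_{O,I(d)}) = 1$ for every decision node $d \in D$, where $s_{O,d}$ and $s_{O,I(d)}$ denote the projections of $s_O$ on $d$ and $I(d)$) and show that under the equality version of \eqref{eq:dpr2_valid_ineq} we are forced to have $y(s_O) = 1$. I would first project $s_O$ onto $C_I$ to obtain a chance segment $s_{C_I}$ and invoke the equality $\sum_{s'_{O} \in E_{O}(s_{C_I})} y(s'_{O}) = 1$. The goal then reduces to showing that among all observable extensions of $s_{C_I}$, the segment $s_O$ is the \emph{unique} compatible one; the remaining extensions must all carry $y$-value zero, which would make the equality force $y(s_O) = 1$.

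For the ``all others are zero'' half, I would use constraint \eqref{eq:dpr2_path_compat_strategy}: if $s'_O \in E_O(s_{C_I})$ is incompatible, then some $d \in D$ has $z(s'_{O,d}\mid s'_{O,I(d)}) = 0$, and since $s'_O \in E_O(s'_{O,d}, s'_{O,I(d)})$, constraint \eqref{eq:dpr2_path_compat_strategy} with a zero right-hand side (combined with $y \ge 0$) forces $y(s'_O) = 0$.

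For uniqueness of the compatible extension, the key observation is that $I(d) \subseteq O$ for every $d \in D$: the decision-node parents sit in $D$ by definition, and every chance-node parent of a decision belongs to $C_I$ by the definition of $C_I$. Hence, processing $D$ in a topological order of the underlying DAG, each $s_{I(d)}$ is already pinned down (its chance-node coordinates come from $s_{C_I}$ and its decision-node coordinates from previously fixed decisions), and \eqref{eq:dpr2_strategy1} selects exactly one $s_d$ for which $z(s_d \mid s_{I(d)}) = 1$. Inductively, the compatible completion of $s_{C_I}$ within $E_O(s_{C_I})$ is unique, and it must coincide with the originally chosen $s_O$.

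Combining the two ingredients: the equality $\sum_{s'_{O} \in E_{O}(s_{C_I})} y(s'_{O}) = 1$ has all its summands equal to zero except possibly the one indexed by $s_O$, so $y(s_O) = 1$, which is exactly \eqref{eq:condition_for_cvar}. The main obstacle I anticipate is the uniqueness argument: it relies on the structural fact $I(d) \subseteq O$ and on a correct inductive unwinding of the topological order, so I would make sure to state these carefully before invoking them, rather than any heavy calculation.
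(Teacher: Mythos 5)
Your proof is correct and takes essentially the same route as the paper's: both hinge on the facts that $I(d) \subseteq O$ for every $d \in D$ and that the diagram is acyclic, so that $s_{C_I}$ together with $z$ determines a unique compatible observable extension, while constraint \eqref{eq:dpr2_path_compat_strategy} zeroes out every incompatible one, leaving the equality version of \eqref{eq:dpr2_valid_ineq} to force $y(s_O)=1$. The only difference is presentational — the paper argues by contradiction with a backward descent through decision-node parents terminating by acyclicity, whereas you run a forward induction in topological order to establish uniqueness directly.
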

\begin{proof}
    See Appendix \ref{sec:proofs}
\end{proof}

As a result, the CVaR maximizing decision strategy can be found by maximizing \eqref{eq:cvar} subject to Constraints \eqref{eq:dpr2_strategy1}-\eqref{eq:cvar-11} and enforcing that Constraint \eqref{eq:dpr2_valid_ineq} holds as an equality. For compactness, we refer to this formulation as \eqref{eq:dpr2_strategy1}-\eqref{eq:cvar}. The complete model for maximizing CVaR is in Appendix \ref{sec:full_cvar_model}.

\begin{rmk}
    The CVaR-model cannot be preprocessed like the expected utility-maximizing model \eqref{eq:dpr2_objective}-\eqref{eq:dpr2_path_compat} by excluding observable segment $y(s_O)$ variables whose expected utility contribution is zero. Enforcing Constraints \eqref{eq:dpr2_valid_ineq} as an equality will render the model infeasible if an observable segment $s_O$ for which $\mathbb{E}_{U}(s_O) = 0$, is such that $z(s_d \mid s_{I(d)}) = 1, \forall d \in D$. Then, $y(s_O)$ would not be included into the model and some other path $s'_O \in E_O(s_{C_I})$ would be required to attain value $y(s'_O) = 1$ to fulfill the Constraints \eqref{eq:dpr2_valid_ineq} as an equality. Based on Proposition \ref{prop:valid_inequality}, this is not possible and the model would be rendered infeasible. 
\end{rmk}

\subsubsection{Chance constraints}

Chance constraints can be introduced to the optimization model to ensure that the probability of a state or a collection of states does not exceed a given probability threshold. In \eqref{eq:dp_objective}-\eqref{eq:dp_path_compat}, chance constraints are relatively simple to impose as the set of paths $S$ encompasses all possible state combinations of the nodes included in the influence diagrams. Let $T \subseteq C \cup D$ be the set of nodes and let $\mathcal{S}_T \subseteq S_T$ be the set of state combinations that are subject to a chance constraint. Let $b_T \in [0,1]$ be a probability threshold. Chance constraints can be added to \eqref{eq:dp_objective}-\eqref{eq:dp_path_compat} by introducing the constraint 
\begin{equation}
    \sum_{s \in E(s_T), s_{T} \in \mathcal{S}_T}p(s)x(s) \leq b_T.
\end{equation}
For formulation \eqref{eq:dpr2_objective}-\eqref{eq:dpr2_path_compat}, the corresponding chance constraint is
\begin{equation}
\label{eq:chance_constraints}
    \sum_{s_T \in \mathcal{S}_T}\sum_{s_O \in E_O(s_{O \cap T})}y(s_O)\left(\sum_{s \in E(s_T)}p(s)\right) \leq b_T.
\end{equation}

The set $\mathcal{S}_T$ and the threshold $b$ can be adjusted to enable other types of constraints. As an example, constraints preventing budget violations should be enforced by selecting $T = \cup_{v \in \mathcal{V}}I(v)$, where $\mathcal{V} \subseteq V$ is the set of value nodes from which budget consumption is evaluated and selecting $\mathcal{S}_T$ to be the state combinations for which the budget would be violated. Then, enforcing Constraint \eqref{eq:chance_constraints} with $b_T = 0$ leads the model to a decision strategy that prevents budget violations. Similarly, constraints enforcing logical requirements (for example, an equipment that must be purchased in order to be utilized) can be enforced by selecting $T$ as the nodes from which the logical requirements consist, and $\mathcal{S}_T$ as the joint states that violate the logical requirement. Enforcing Constraint \eqref{eq:chance_constraints} with $b_T = 0$ leads to a decision strategy that does not violate the logical requirements.

\subsection{Computational considerations}

Formulation \eqref{eq:dpr2_objective}-\eqref{eq:dpr2_path_compat} improves upon \eqref{eq:dp_objective}-\eqref{eq:dp_path_compat} by having fewer variables and a tighter formulation that eliminates suboptimal solutions. However, the relative performance improvement of \eqref{eq:dpr2_objective}-\eqref{eq:dpr2_path_compat} compared to \eqref{eq:dp_objective}-\eqref{eq:dp_path_compat} depends on the structure of the influence diagram. If all nodes are observed so that $O = C \cup D$, then \eqref{eq:dpr2_objective}-\eqref{eq:dpr2_path_compat} only improves upon \eqref{eq:dp_objective}-\eqref{eq:dp_path_compat} by having the additional valid inequality \eqref{eq:dpr2_valid_ineq}.

A key tractability bottleneck in \eqref{eq:dp_objective}-\eqref{eq:dp_path_compat} is the set of paths $S$, which grows exponentially with respect to the number of nodes, and their associated number of states, in the influence diagram. Although \eqref{eq:dpr2_objective}-\eqref{eq:dpr2_path_compat} does not associate a decision variable with each path, the full set of paths is still considered in the objective function. To see this, notice that the sum $\sum_{s_{O} \in S_{O}}\mathbb{E}_{U}(s_{O})y(s_{O}) = \sum_{s_{O} \in S_{O}} \sum_{s \in E(s_O)}p(s)U(s)y(s_O)$ contains $|S|$ elements. On the other hand, $\mathbb{E}_{U}(s_{O})$ is a parameter that can be precalculated for each $s_{O}$, indicating the opportunity to leverage parallel computing to reduce the preprocessing time. 
\section{Computational Experiments}\label{sec:Results}

We assess the computational performance by solving variants of reported decision-making problems with the RJT formulation, the Decision Programming formulation \eqref{eq:dp_objective}-\eqref{eq:dp_path_compat}, and our proposed formulation \eqref{eq:dpr2_objective}-\eqref{eq:dpr2_path_compat}. All models were implemented using Julia v1.10.3 with JuMP v1.23.0 \citep{Lubin2023} and solved with the Gurobi solver v11.0.2. The RJT-models and the Decision Programming formulations  \eqref{eq:dpr2_objective}-\eqref{eq:dpr2_path_compat} were constructed using the Julia package DecisionProgramming.jl v2.0.1. The experiments were conducted using an Intel E5-2680 CPU at 2.5GHz with 16 GB of RAM, provided by the Aalto University School of Science ``Science-IT'' project. The code and data to support the numerical experiments in this section can be found via \href{https://github.com/toubinaattori/Decision-Programming-new-formulation}{https://github.com/toubinaattori/Decision-Programming-new-formulation}.

\subsection{Assessing the optimization models}

A starting point for solving an influence diagram with a MILP formulation is an influence diagram $G = (N,A)$ with discretized state spaces for the decision and chance nodes, conditional probability distributions for chance nodes, and numerical values of the utility functions at value nodes for the consequences. 

Depending on the framework, an influence diagram is preprocessed differently into an MILP formulation. To build the RJT model, the influence diagram is first converted to an RJT \citep{parmentier2020integer}. Then, the MILP formulation is created for $G$ with the specified numerical parameters. To build the model \eqref{eq:dpr2_objective}-\eqref{eq:dpr2_path_compat}, parameters $\mathbb{E}_U(s_O)$ are calculated for each $s_O \in S_O$. Then the corresponding MILP formulation is created with the specified numerical values.  Similarly, to build the model \eqref{eq:dp_objective}-\eqref{eq:dp_path_compat}, parameters $p(s)$ and $U(s)$ are calculated for each $s \in S$. The MILP formulation is created with the calculated parameters. The \textit{preprocessing and model construction time} is denoted by $\tau_p$. The \textit{solution time} is denoted as $\tau_s$. 

In practice, the most important efficiency measure is the \emph{total time}, denoted as $\tau_t = \tau_p + \tau_s$, which is the total time that is needed to formulate and solve the MILP model based on an influence diagram. Therefore, in addition to the solution time of each MILP model, we report the total time. For compactness, we do not separately report the preprocessing time, which can be recovered from the reported numerical values as $\tau_p = \tau_t - \tau_s$. We set a limit on the total time $\tau_t^{max} = 3600$ seconds. If this limit is reached, we report either the best feasible solution or that no feasible solution was found within the time limit. We denote the average total time as $\overline{\tau}_t$ and average solution time $\overline{\tau}_s$, both calculated across the sample of randomly generated instances for each problem. In cases where not all generated instances were solved to optimality within the time limit, $\overline{\tau}_t$ and $\overline{\tau}_s$ are the average times of those instances that were solved to optimality. In addition, we report the number of instances solved to optimality, denoted as ``Opt (\#)'' and the number of instances for which a feasible solution was found within a time limit, which is denoted as ``Feas (\#)''. If no solutions were found within the time limit, we denote the result as ``T''. In some cases, generating the MILP model required more memory than the allocated 16 GB. For these cases, we report that the cause of termination was that the processor was out of memory (OoM). Consequently, ``OoM'' also indicates that no feasible solutions were found for any instance of a given size.

\subsection{Turbine inspection and maintenance problem} \label{sec:turbine}

First, consider a variant of a turbine system inspection and maintenance problem in \citep{mancuso2021optimal}, representing a turbine and an associated sensor that are subject to wear and tear and weathering, which negatively affect the performance (i.e., the flow) of the turbine. The sensor condition and the turbine condition are both monitored by obtaining estimates, based on which a decision to carry out a more thorough inspection can be made to obtain a more accurate information about the true condition of the system. After possible inspections, the system can be maintained to increase the turbine flow. This problem is an example of an influence diagram that lacks a periodic structure and is, as our numerical results corroborate, computationally demanding for the RJT framework. The nodes of the problem are listed in Table \ref{tab:nodes_turbine} and the influence diagram of the problem is in Figure \ref{fig:Turbine}.

\begin{table}
\caption
{Nodes of the turbine inspection and maintenance problem} \label{tab:nodes_turbine}
\begin{tabular}{@{}l|@{\quad}c|@{}@{\quad}c|@{}@{\quad}c}
\toprule
Node              & Type &  States  & Explanation    \\ 
\midrule
$W$    & Chance node   & [0,100] & Turbine weathering          \\ 
$FH$    & Chance node   & [0,100] & Fired hours          \\ 
$SS$    & Chance node & [0,100] & True condition of the sensor \\ 
$TS$    & Chance node & [0,100] & True condition of the turbine \\ 
$SE$    & Chance node & [0,100] & An estimate of the sensor condition \\ 
$TE$    & Chance node & [0,100] & An estimate of the turbine condition\\ 
$IN$    & Decision node & none, sensor check, turbine check & Decision to inspect the sensor and turbine \\ 
$SR$    & Chance node & $[0,100]$  & Result of sensor inspection \\ 
$TR$    & Chance node & $[0,100]$ & Result of turbine inspection \\ 
$M$    & Decision node & none, level 1, level 2 & Maintenance decision \\ 
$TF$    & Chance node & [0,100] & Turbine flow after possible maintenance \\ 
$U$    & Value node & - & Rewards minus the costs \\
\bottomrule
\end{tabular}
\end{table}

\begin{figure}[ht]
\centering 
\begin{tikzpicture}
    [decision/.style={color = black,fill=white!80, draw, minimum size=2.5em, inner sep=2pt}, 
    chance/.style={circle,color = black, fill=white!80, draw, minimum size=2.5em, inner sep=2pt},
    value/.style={diamond,color = black, fill=white!60, draw, minimum size=2.5em, inner sep=2pt},
    scale=1.5]
     \node[chance] (tl) at (3, 6)      {$W$};
     \node[chance] (fh) at (1, 6)      {$FH$};
     \node[chance] (ss) at (1, 5)      {$SS$};
     \node[chance] (ts) at (3, 5)      {$TS$};
     \node[chance] (se) at (1, 4)      {$SE$};
     \node[chance] (te) at (3, 4)      {$TE$};
     \node[decision] (i) at (2, 3)      {$IN$};
     \node[chance] (sr) at (1, 2)      {$SR$};
     \node[chance] (tr) at (3, 2)      {$TR$};
     \node[decision] (m) at (2, 1)      {$M$};
     \node[chance] (tf) at (2, 0)      {$TF$};
     \node[value] (u) at (2, -1)      {$U$};
     \draw[->, thick] (fh) -- (ss);
     \draw[->, thick] (fh) to[bend left = 10] (i);
     \draw[->, thick] (fh) to[bend right = 70] (m);
     \draw[->, thick] (fh) -- (ts);
     \draw[->, thick] (tl) -- (ts);
     \draw[->, thick] (tl) to[bend left = 50] (tf);
     \draw[->, thick] (ss) -- (se);
     \draw[->, thick] (ts) -- (te);
     \draw[->, thick] (se) -- (i);
     \draw[->, thick] (ts) -- (se);
     \draw[->, thick] (se) -- (te);
     \draw[->, thick] (ss) to[bend right = 30] (sr);
     \draw[->, thick] (ts) to[bend left = 30] (tr);
     \draw[->, thick] (ts) to[bend left = 40] (tf);
     \draw[->, thick] (te) -- (i);
     \draw[->, thick] (se) -- (sr);
     \draw[->, thick] (te) -- (tr);
     \draw[->, thick] (i) -- (sr);
     \draw[->, thick] (i) -- (tr);
     \draw[->, thick] (sr) -- (m);
     \draw[->, thick] (sr) -- (tr);
     \draw[->, thick] (tr) -- (m);
      \draw[->, thick] (m) -- (tf);
      \draw[->, thick] (m) to[bend right = 35] (u);
      \draw[->, thick] (i) to[bend left = 30] (u);
      \draw[->, thick] (tf) -- (u);
\end{tikzpicture}
\vspace{-0.5cm}
\caption{Influence diagram of the turbine inspection and maintenance problem} \label{fig:Turbine}
\vspace{-0.5cm}
\end{figure}
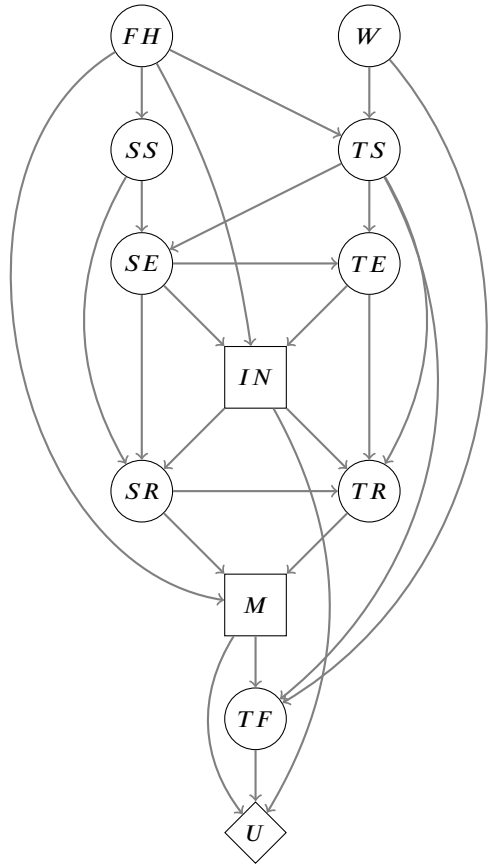

Compared to the influence diagram used in \citet{mancuso2021optimal}, we modify the influence diagram in three ways. In \citet{mancuso2021optimal}, the authors assume that the node $FH$ only has one state alternative, which consequently is known at every decision, even without having the edges $(FH,IN)$ and $(FH,M)$ included in the influence diagram. In \citet{mancuso2021optimal}, the authors then solve multiple instances of the problem with a different state for node $FH$. We instead solve one large influence diagram, where $FH$ has many alternative states. Since the total of fired hours is assumed to be known at each decision node, we add the corresponding edges $(FH,IN)$ and $(FH,M)$. In addition, compared to \citet{mancuso2021optimal}, we have one additional chance node $W$ that represents exogenous conditions that affect the state and flow of the turbine, thus making the setup more realistic.


We assume that there are three inspection alternatives (no inspection, sensor check, and turbine check) and three maintenance alternatives (no maintenance, level-1 maintenance, and level-2 maintenance) to choose from. All chance nodes in the problem represent continuous random variables. In Table \ref{tab:nodes_turbine}, state $0$ represents the smallest possible state for the given node. For example, $s_{FH} = 0$ represents a situation where the turbine is not running, and $s_{SS} = 0$ represents the situation where the sensor is not operational. State $100$, on the other hand, represents the largest possible state. For example, $s_{FH} = 100$ represents the maximum duration that the turbine can run, and $s_{SS} = 100$ represents the situation where the sensor is in pristine condition. 

An influence diagram representation of the turbine inspection and maintenance problem requires that the state spaces of chance nodes are represented as a discrete set of alternatives. We test the performance of the MILP models by solving randomly generated instances of the turbine inspection and maintenance problem when representing the state spaces of chance nodes by $K$ different alternatives. We make minimal assumptions on the numerical parameters to create a heterogeneous set of analyzed instances. This process ensures that the resulting analyses are not dependent on assumptions about the probability distributions or consequences but give accurate insight into the effectiveness of the developed formulations.

A random instance is generated by discretizing the state spaces, generating conditional probability distributions for chance nodes, and generating the numerical output of the utility functions at the value nodes. We create the state spaces for chance nodes by randomly selecting $K$ alternatives from the domain of the said chance node. We then create random conditional probability distributions $\mathbb{P}(s_c \mid s_{I(c)})$ for each chance node $c \in C$ and information state $s_{I(c)}$ by generating a random vector $v \in [0,1]^K$ so that each index of $v$ corresponds to one state $s_C \in S_C$. We then set the conditional probabilities $\mathbb{P}(s_c \mid s_{I(c)})$ to be the corresponding normalized value from $v$, thus guaranteeing that $\sum_{s_C \in S_C}\mathbb{P}(s_C \mid s_{I(c)}) = 1$. We randomly generate the costs of inspection alternatives by selecting two numbers from the uniform distribution $[10,100]$ and associate the smaller cost to the sensor check and the higher cost to the turbine check. Similarly, we generate costs of maintenance decisions by selecting two random numbers from the uniform distribution $[500,3000]$ and associate the smaller cost with the level-1 maintenance, and the higher cost with the level-2 maintenance option. The rewards associated with the turbine flow are generated by selecting $K$ random numbers from the uniform distribution $[100,10000]$ and sorting them so that the smallest reward is associated with the smallest turbine flow state.

We tested the computational performance of the formulations by generating and solving 50 random instances of the turbine inspection and maintenance problem with discretizations of size $K = 2,\dots,6$ by using the proposed formulation \eqref{eq:dpr2_objective}-\eqref{eq:dpr2_path_compat}, the RJT formulation, and the Decision Programming formulation \eqref{eq:dp_objective}-\eqref{eq:dp_path_compat}. Table \ref{tab:turbine} presents the results of the experiments. The results show that when using the formulation \eqref{eq:dpr2_objective}-\eqref{eq:dpr2_path_compat}, the solver overperforms the other models in finding the optimal solution for all generated instances, much faster compared to other formulations. In contrast, when using the RJT model, the solver is able to find the optimal solution in all generated instances of size $K = 2$. In instances of size $K \in \{3,4\}$, when using the RJT model, the solver does not converge to the optimal solution within the time limit in all generated instances. However, a feasible solution for every instance is found. In instances of size $K \geq 5$ when using the RJT model, the solver finds no feasible solutions. In terms of total time, solution time, and the number of instances solved to optimality, using the Decision Programming formulation \eqref{eq:dp_objective}-\eqref{eq:dp_path_compat} yields the worst performance of the solver. Significantly more time to construct the optimization model is needed, and the solver takes considerably more time to find the optimal decision strategy when compared to the other formulations. Moreover, no instances of size $K \geq 3$ are solved to optimality, which are on average solved in 7.5 seconds when using the proposed formulation \eqref{eq:dpr2_objective}-\eqref{eq:dpr2_path_compat} and in 330 seconds when using the RJT-formulation. This computational experiment showcases the value of the proposed formulation, as it enables efficiently solving instances that are intractable for existing formulations.

\begin{table}
\caption
{Results of the turbine monitoring and maintenance problem} \label{tab:turbine}
\begin{tabular}{@{}l|@{\quad}c@{}@{\quad}c@{}@{\quad}c@{\quad}|c@{\quad}@{\quad}c@{}@{\quad}c@{}@{\quad}c@{\quad}|@{\quad}c@{}@{\quad}c@{}@{\quad}c@{}@{\quad}c@{\quad}}
\toprule
 & \multicolumn{3}{c}{\eqref{eq:dpr2_objective}-\eqref{eq:dpr2_path_compat} } & \multicolumn{4}{c}{RJT}  & \multicolumn{4}{c}{\eqref{eq:dp_objective}-\eqref{eq:dp_path_compat}} \\ \cline{2-5} \cline{6-8} \cline{9-12}
$K$              & $\overline{\tau}_t$ (s)  &  $\overline{\tau}_s$ (s)  & Opt (\#)  & $\overline{\tau}_t$ (s)  & $\overline{\tau}_s$ (s) & Opt (\#)& Feas (\#) & $\overline{\tau}_t$ (s)  &  $\overline{\tau}_s$ (s)  & Opt (\#) & Feas (\#)   \\ \midrule
2 & 0.21 & 0.18 & 50/50  & 0.29 & 0.20 & 50/50 & 50/50 & 198.34 & 197.82 & 50/50 & 50/50 \\ 
3 & 7.53 & 6.46 & 50/50   & 325.95 & 319.31 & 44/50 & 50/50 & OoM & OoM & 0/50 & 0/50 \\ 
4 & 73.77 & 65.15 & 50/50  & 370.73 & 123.35 & 15/50 & 50/50 & OoM & OoM & 0/50 & 0/50 \\ 
5 & 350.48 & 286.74 & 50/50   & OoM & OoM & 0/50 & 0/50 & OoM & OoM & 0/50 & 0/50 \\ 
6 & 1457.26 & 1132.39 & 50/50   & OoM & OoM & 0/50 & 0/50 & OoM & OoM & 0/50 & 0/50 \\ \bottomrule  

\end{tabular}
\end{table}

In addition to the computational tests presented in this section, Appendix \ref{sec:probability_dist} presents another version of the turbine inspection and maintenance problem in which we demonstrate how a specific instance of the problem with known conditional probability distributions can be discretized and solved. Moreover, we demonstrate how much the decision strategy improves using the larger state spaces with the proposed formulation \eqref{eq:dpr2_objective}-\eqref{eq:dpr2_path_compat} compared to solving the decision strategy using the largest possible state space enabled by the RJT model and the Decision Programming formulation \eqref{eq:dp_objective}-\eqref{eq:dp_path_compat}, further showcasing the value of having available a more efficient formulation.

\subsection{Extended oil wildcatter problem}
\label{sec:oil}

We consider a variant of the oil wildcatter problem \citep{Raiffa1968}, in which a wildcatter decides whether to run a series of geological tests for a potential oil well before deciding whether or not to start drilling. In the simple version of the problem, the decision maker has only one test to choose from. More realistically, there are $M$ tests available. Compared to the turbine inspection and maintenance problem, in the extended oil wildcatter problem, the number of nodes increases, but the number of states for each node remains constant, whereas in the turbine inspection and maintenance problem, the number of states increases, but the number of nodes remains constant. This experiment provides insight into how the formulation \eqref{eq:dpr2_objective}-\eqref{eq:dpr2_path_compat} performs computationally in comparison with the current state-of-the-art for influence diagrams with multiple nodes having fewer state alternatives (i.e., smaller state spaces in each node). The influence diagram of the extended oil wildcatter problem with two tests is illustrated in Figure \ref{fig:Oil} with its nodes described in Table \ref{tab:nodes_oil}.

\begin{figure}[ht]
\centering 
\begin{tikzpicture}
    [decision/.style={color = black, fill=white!80, draw, minimum size=2.5em, inner sep=2pt}, 
    chance/.style={circle, color = black, fill=white!80, draw, minimum size=2.5em, inner sep=2pt},
    value/.style={diamond, color = black, fill=white!60, draw, minimum size=2.5em, inner sep=2pt},
    scale=1.5]
    \node[chance]   (a) at (-2, 1.75)  {$O$};
    \node[chance]   (s) at (-2, 0)  {$S$};
    \node[chance]   (t1) at (0, 3)  {$R_1$};
    \node[chance]   (t2) at (-4, 3)  {$R_2$};
    \node[decision]   (i1) at (0, 1)  {$T_1$};
    \node[decision]   (i2) at (-4, 1)  {$T_2$};
    \node[decision]   (d) at (-2, 5)  {$D$};
    \node[value]   (v) at (-2, 3.25)  {$U$};
    \node[value]   (v1) at (0, -1)  {$C_1$};
    \node[value]   (v2) at (-4, -1)  {$C_2$};
    \draw[->, thick] (a) -- (s);
    \draw[->, thick] (s) -- (t1);
    \draw[->, thick] (s) -- (t2);
    \draw[->, thick] (i1) -- (t1);
    \draw[->, thick] (i2) -- (t2);
    \draw[->, thick] (t2) -- (d);
    \draw[->, thick] (t1) -- (d);
    \draw[->, thick] (d) -- (v);
    \draw[->, thick] (a) -- (v);
    \draw[->, thick] (i1) -- (v1);
    \draw[->, thick] (i2) -- (v2);
\end{tikzpicture}
\vspace{-0.5cm}
\caption{Influence diagram of the extended oil wildcatter problem, where $M=2$} \label{fig:Oil}
\vspace{-0.5cm}
\end{figure}
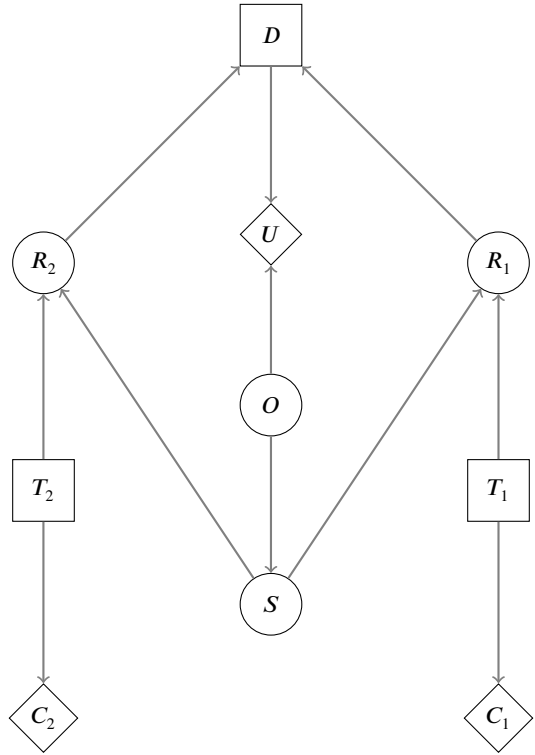

\begin{table}
\caption{Nodes of the extended oil wildcatter problem }\label{tab:nodes_oil}
\begin{tabular}{@{}l|@{\quad}c|@{}@{\quad}c|@{}@{\quad}c}
\toprule
Node              & Type &  States  & Explanation    \\ \midrule
$O$    & Chance node   & dry, medium, wet & The state of the oil well          \\ 
$S$    & Chance node & dry, medium, wet & Geological features of the drilling site \\ 
$T_i$    & Decision node & yes, no & Decision to commission test $i$ \\ 
$R_i$    & Chance node & N/A, dry, medium, wet & Results of test $i$ \\ 
$C_i$    & Value node & - & Costs of testing decision \\ 
$D$    & Decision node & yes, no & Drilling decision\\ 
$U$    & Value node & - & Reward minus drilling costs \\ \bottomrule 

\end{tabular}
\end{table}

We create random instances of the extended oil wildcatter problem by randomly generating conditional probabilities and utilities. We created random conditional probabilities for each chance node similarly as in the turbine inspection and maintenance problem by enforcing that $\sum_{s_c \in S_c} \mathbb{P}(s_c \mid s_{I(c)}) = 1, \forall c \in C, s_{I(c)} \in S_{I(c)}$. For nodes $R_i$ that represent reports on tests, the conditional probabilities are such that $\mathbb{P}(s_{R_i} = N/A \mid s_{T_i} = yes) = 0$ and $\mathbb{P}(s_{R_i} = N/A \mid s_{T_i} = no) = 1$. The utilities are generated by randomly selecting a cost for each test from the uniform distribution $[1,50]$ and the drilling costs from the uniform distribution $[50,90]$. Payoffs are generated by selecting two random numbers from the uniform distribution $[100,1000]$ and ordering them so that $s_O = medium$ is associated with the smaller payoff and $s_O = wet$ is associated with the larger payoff. The state $s_O = dry$ is associated with zero payoff. Moreover, if $s_D = no$, drilling is not started and the payoff will be zero. The utility at $U$ is calculated by subtracting the testing and building costs from the payoffs.

For each $M = 2,...,8$, we generated and solve 50 random instances. We report the number of instances solved to optimality, the number of instances for which a feasible solution was found, the average total times, and average solution times of the solved instances in Table \ref{tab:oil}.

\begin{table}
\caption
{Results of the extended oil wildcatter problem} \label{tab:oil}
\begin{tabular}{@{}l|@{\quad}c@{}@{\quad}c@{}@{\quad}c@{}@{\quad}|c@{\quad}@{\quad}c@{}@{\quad}c@{}@{\quad}c@{\quad}|@{\quad}c@{}@{\quad}c@{}@{\quad}c@{}@{\quad}c@{\quad}}
\toprule
 & \multicolumn{3}{c}{\eqref{eq:dpr2_objective}-\eqref{eq:dpr2_path_compat} } & \multicolumn{4}{c}{RJT}   & \multicolumn{4}{c}{\eqref{eq:dp_objective}-\eqref{eq:dp_path_compat}} \\ \cline{2-5} \cline{6-8} \cline{9-12}
$M$              & $\overline{\tau}_t$ (s)  &  $\overline{\tau}_s$ (s)  & Opt (\#)   & $\overline{\tau}_t$ (s)  & $\overline{\tau}_s$ (s) & Opt (\#)& Feas (\#) & $\overline{\tau}_t$ (s)  &  $\overline{\tau}_s$ (s)  & Opt (\#) & Feas (\#)   \\ \midrule 
1     & 0.02&   0.0010&   50/50&   0.22 & 0.0021 & 50/50& 50/50  & 0.12& 0.003& 50/50 & 50/50      \\ 
2      & 0.02&   0.0012&  50/50 &    0.28&  0.0055& 50/50& 50/50 & 0.15& 0.027& 50/50 & 50/50     \\ 
3      & 0.02&   0.0016&  50/50&    0.63&  0.027& 50/50& 50/50 & 0.37& 0.067&50/50 & 50/50      \\ 
4    & 0.05&   0.0049&  50/50&    4.22&    0.11 & 50/50& 50/50 & 2.24& 0.55&50/50 & 50/50      \\ 
5     & 0.37&   0.044&  50/50&    57.73&   0.79 &50/50& 50/50 & 21.53& 2.14&50/50 & 50/50      \\ 
6     & 5.14&   2.73& 50/50&    1330.76&    5.35 &50/50& 50/50 & 308.31& 5.92&50/50 & 50/50       \\ 
7     & 25.24&   8.11&     50/50&  OoM  &  OoM & 0/50& 0/50  & T & T &0/50 & 0/50   \\ 
8    & 332.41&   55.94&   50/50 & OoM   & OoM & 0/50 & 0/50 & OoM& OoM &0/50 & ~0/50     \\ \bottomrule  

\end{tabular}
\end{table}

The results show that the formulation \eqref{eq:dpr2_objective}-\eqref{eq:dpr2_path_compat} dominates the other formulations in terms of total time, solution time, and solution quality. The total times are on average 1-2 orders of magnitude smaller when using \eqref{eq:dpr2_objective}-\eqref{eq:dpr2_path_compat} to find the optimal decision strategy compared to using the RJT model or \eqref{eq:dp_objective}-\eqref{eq:dp_path_compat}. Additionally, when using \eqref{eq:dpr2_objective}-\eqref{eq:dpr2_path_compat}, all evaluated instances converge to the optimal solution. When using the RJT model, the solver runs out of memory to generate any of the instances of size $M \ge 7$. Similarly, no feasible solutions for problem instances of size $M \ge 7$ can be found using \eqref{eq:dp_objective}-\eqref{eq:dp_path_compat}.

The computational experiments on the extended oil wildcatter problem indicate that the proposed formulation results in significant computational improvements in solving problems that contain many nodes and few states at chance and decision nodes. Although the solution times improve drastically when using \eqref{eq:dpr2_objective}-\eqref{eq:dpr2_path_compat}, the solution times still grow exponentially as a function of the number of nodes in the influence diagram.

\subsection{Risk-averse water management problem}

As the final computational example, we consider a variant of the environmental management problem for the restoration of the Tuusulanjärvi lake reported by \citet{varis1990bayesian}. The problem consists of deciding on a monitoring strategy to track and a dilution strategy to improve the water quality of the lake. Compared to the influence diagram presented in \citet{varis1990bayesian}, we have an additional chance node $A$ that represents exogenous agricultural activity that affects the water quality of the lake. The influence diagram of the problem is in Figure \ref{fig:WaterResource}, and the nodes are in Table \ref{tab:nodes_water}. As in the turbine inspection and maintenance problem, the nodes with inherently continuous states are represented with states lying within the interval [0,100], where 0 is the smallest possible value, and 100 represents the largest possible value. For example, $s_A = 100$ represents the highest possible agricultural activity, which produces the largest considered amount of excess fertilizers, which leads to lower water quality. For the water quality before and after dilution, $s_{W_i} = 100$ represents the best possible quality and $s_{W_i} = 0$ represents the worst possible quality. For each problem instance, we select $K$ discrete states from the domain of continuous variables. 

We created problem instances by generating random probabilities such that $\sum_{s_c \in S_c} \mathbb{P}(s_c \mid s_{I(c)}) = 1, \forall c \in C, s_{I(c)} \in S_{I(c)}$. The utilities are generated so that, for the costs of the monitoring strategy, we generate three random numbers from the uniform distribution $[0,100]$ and associate the largest value with level-3 monitoring and the smallest value with level-1 monitoring. The state $s_C$ represents the dilution cost. For the reward for posterior water quality, we generate $K$ random numbers from the uniform distribution $[0,500]$ and sort these in increasing order so that the best posterior water quality state is associated with the largest reward.

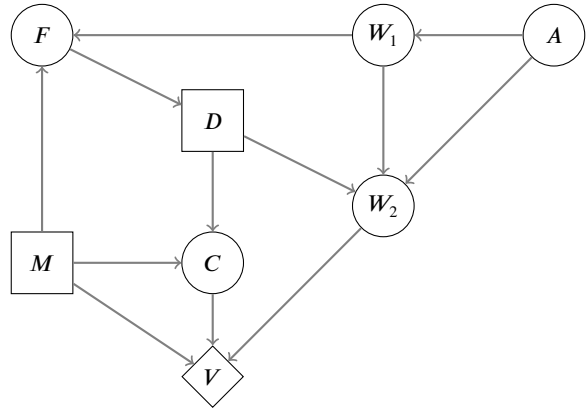
\begin{figure}[ht]
\centering 
\begin{tikzpicture}
    [decision/.style={color = black, fill=white!80, draw, minimum size=2.5em, inner sep=2pt}, 
    chance/.style={circle, color = black, fill=white!80, draw, minimum size=2.5em, inner sep=2pt},
    value/.style={diamond, color = black, fill=white!60, draw, minimum size=2.5em, inner sep=2pt},
    scale=1.5]
     \node[chance] (c) at (1.5, 1)      {$C$};
     \node[chance] (w1) at (3, 3)      {$W_1$};
     \node[chance] (a) at (4.5, 3)      {$A$};
     \node[chance] (w2) at (3, 1.5)      {$W_2$};
     \node[chance] (f) at (0, 3)      {$F$};
     \node[value]  (v) at (1.5, 0)      {$V$};
     \node[decision] (m) at (0, 1)    {$M$};
     \node[decision] (d) at (1.5, 2.25)    {$D$};
     \draw[->, thick] (f) -- (d);
     \draw[->, thick] (w1) -- (w2);
     \draw[->, thick] (d) -- (w2);
     \draw[->, thick] (a) -- (w2);
     \draw[->, thick] (a) -- (w1);
     \draw[->, thick] (w1) -- (f);
     \draw[->, thick] (m) -- (f);
     \draw[->, thick] (d) -- (c);
     \draw[->, thick] (m) -- (c);
     \draw[->, thick] (c) -- (v);
     \draw[->, thick] (m) -- (v);
     \draw[->, thick] (w2) -- (v);
\end{tikzpicture}
\vspace{-0.5cm}
\caption{ Influence diagram of the risk-averse water resource management problem} \label{fig:WaterResource}
\vspace{-0.5cm}
\end{figure}

\begin{table}
\caption
{Nodes of the risk-averse water management problem }\label{tab:nodes_water}
\begin{tabular}{@{}l|@{\quad}c|@{}@{\quad}c|@{}@{\quad}c}
\toprule 
Node              & Type &  States  & Explanation    \\ \midrule 
$A$    & Chance node   & [0,100] &   Agriculture activity        \\ 
$W_1$    & Chance node & [0,100] & Water quality before dilution \\ 
$W_2$    & Chance node & [0,100] & Water quality after dilution \\ 
$F$    & Chance node & [0,100] & Forecast of water quality \\ 
$M$    & Decision node & level 1, level 2, level 3 & Water quality monitoring strategy \\ 
$D$    & Decision node & [0,100] & Amount of clean water admitted into the lake\\ 
$C$    & Chance node & [0,100] & Cost of dilution\\ 
$V$    & Value node & - & Rewards minus monitoring and dilution costs\\ \bottomrule  

\end{tabular}{}
\end{table}

In contrast to the examples in Sections \ref{sec:turbine} and \ref{sec:oil} where influence diagrams were solved by maximizing the expected utility, we use \eqref{eq:dpr2_strategy1}-\eqref{eq:cvar} to find the strategy that minimizes risks by maximizing the CVaR given a probability threshold $\alpha = 0.2$, which implies a confidence level of 80\%. We compare solution times by solving the same instances with the CVaR formulation of the RJT framework presented in \citet{herrala2025risk} and with the CVaR formulation for Decision Programming, which uses the optimization model presented in \citet{hankimaa2023solving} with constraints \eqref{eq:cvar-1}-\eqref{eq:cvar-11} and the objective function defined as in \eqref{eq:cvar}. We refer to this model as DP-CVaR.

\begin{table}
\caption
{Results of the risk-averse water management problem }\label{tab:water_cvar}
\begin{tabular}{@{}l|@{\quad}c@{}@{\quad}c@{}@{\quad}c@{}@{\quad}|c@{\quad}@{\quad}c@{}@{\quad}c@{}@{\quad}c@{\quad}|@{\quad}c@{}@{\quad}c@{}@{\quad}c@{}@{\quad}c@{\quad}}
\toprule
  & \multicolumn{3}{c}{\eqref{eq:dpr2_strategy1}-\eqref{eq:cvar} }& \multicolumn{4}{c}{RJT}  & \multicolumn{4}{c}{DP-CVaR} \\ \cline{2-5} \cline{6-8} \cline{9-12}
$K$                & $\overline{\tau}_t$ (s)  & $\overline{\tau}_s$ (s) & Opt (\#)& $\overline{\tau}_t$ (s)  &  $\overline{\tau}_s$ (s)  & Opt (\#) & Feas (\#) & $\overline{\tau}_t$ (s)  &  $\overline{\tau}_s$ (s)  & Opt (\#) & Feas (\#)   \\ \midrule
2 & 0.06 & 0.04 & 50/50 & 0.04 & 0.02 & 50/50 & 50/50 & 0.36 & 0.18 & 50/50 & 50/50 \\ 
3 & 0.06 & 0.04 & 50/50 & 0.28 & 0.12 & 50/50 & 50/50 & 28.63 & 27.96 & 50/50 & 50/50 \\ 
4 & 0.10 & 0.07 & 50/50 & 1.94 & 0.65 & 50/50 & 50/50 & 1068.33 & 1064.70 & 50/50 & 50/50 \\ 
5 & 0.21 & 0.18 & 50/50 & 12.57 & 3.47 & 50/50 & 50/50 & T & T & 0/50 & 48/50 \\ 
6 & 0.43 & 0.39 & 50/50 & 71.64 & 16.18 & 50/50 & 50/50 & T & T & 0/50 & 41/50 \\ 
7 & 0.89 & 0.83 & 50/50 & 297.78 & 37.64 & 50/50 & 50/50 & T & T & 0/50 & 10/50 \\ 
8 & 1.23 & 1.11 & 50/50 & 1032.05 & 106.58 & 50/50 & 50/50 & OoM & OoM & 0/50 & 0/50 \\ 
9 & 3.18 & 2.95 & 50/50 & 3462.71 & 1134.74 & 33/50 & 36/50 & OoM & OoM & 0/50 & 0/50 \\ 
10 & 3.91 & 3.43 & 50/50 & OoM & OoM & 0/50 & 0/50 & OoM & OoM & 0/50 & 0/50 \\ \bottomrule  
\end{tabular}
\end{table}

We report the results of 50 randomly generated instances for values of $K$ between 2 and 10 in Table \ref{tab:water_cvar}. They show that  using \eqref{eq:dpr2_strategy1}-\eqref{eq:cvar} significantly improves the computational time needed for finding the CVaR maximizing strategy. The total times required for solving \eqref{eq:dpr2_strategy1}-\eqref{eq:cvar} are significantly better in instances of size $K \geq 3$, and the difference becomes more pronounced for larger instances. Similarly, \eqref{eq:dpr2_strategy1}-\eqref{eq:cvar} can be solved faster than the RJT and DP-CVaR models in instances of size $K \geq 3$. In terms of solution optimality when using the RJT model, the solver only finds optimal solutions in 33 instances of size $K = 9$ and a feasible solution in only 36 instances of the same size. In instances of size $K = 10$ when using the RJT framework, the solver finds no feasible solutions, whereas when using \eqref{eq:dpr2_strategy1}-\eqref{eq:cvar} the solver takes on average less than 4 seconds to find the optimal solution. When using the DP-CVaR model, the solver does not find any optimal solutions for instances of size $K \geq 5$, and it fails to find any feasible solutions for instances of size $K \geq 8$.

Taken together, these computational experiments show that \eqref{eq:dpr2_strategy1}-\eqref{eq:cvar} significantly improves the efficiency in finding the CVaR maximizing decision strategy. Instances that have been intractable for the existing formulations can be solved to optimality in a few seconds using \eqref{eq:dpr2_strategy1}-\eqref{eq:cvar}. In addition, the total time for \eqref{eq:dpr2_strategy1}-\eqref{eq:cvar} does not grow exponentially, suggesting that considerably larger instances of the risk-averse water resource management problem can be solved to optimality within a reasonable time with the proposed formulation. 

\section{Conclusions}\label{sec:Conclusions}

In this paper, we propose a novel MILP formulation for Decision Programming that builds upon that originally presented in \citep{salo2022decision}. The proposed formulation improves the existing formulation by aggregating decision variables, which results in an MILP formulation that can be solved more efficiently. We rigorously demonstrate how this aggregation can be performed while retaining the validity of the model. Moreover, we improve the resulting MILP formulation by introducing a valid inequality. 

In addition to maximizing expected utility, we present how our formulation can be generalized to optimize CVaR and how chance constraints can be imposed on the optimization model. We demonstrate the computational efficiency of the proposed formulations by solving variants of problems from the literature and compare the solution times to the RJT framework \citep{parmentier2020integer,herrala2025risk}, and to the Decision Programming formulation presented in \citep{hankimaa2023solving}, which, to the best of our knowledge, comprise the state-of-the-art MILP formulations for decision-making problems represented by influence diagrams. We find that the MILP models based on the proposed formulations are significantly more efficient to solve compared to the MILP models based on the existing frameworks across all analyzed problem types. 

Our main contribution lies in computational improvements. As our experiments demonstrate, the RJT framework struggles when the problem lacks a periodic structure or if the influence diagram contains nodes with large state spaces. The proposed formulation is designed to address such problems efficiently and, therefore, can be seen as a complement to the RJT framework. Ultimately, our formulation helps solve influence diagrams that previously have not been tractable. Our developments are especially valuable in permitting finer state-space discretizations when solving influence diagrams in which either decision or chance nodes represent decisions or random variables that are measured on continuous scales.

Despite the improvements, the presented reformulation is still subject to the computational issues related to solving influence diagrams using MILP formulations. In particular, the computational efficiency of the presented formulations depends on the structure of the influence diagram. For instance, influence diagrams that lack a periodic structure and contain relatively many decision nodes and relatively few chance nodes are typically challenging to solve with both the RJT framework and the formulations presented in this paper. In addition, the proposed formulation still exhibits exponential growth with respect to the number of nodes in the observation set, which limits the size of the influence diagrams that can be solved. Thus, future research should then focus on these drawbacks by developing new MILP formulations that can be solved more efficiently, in particular, exploring possible valid inequalities and specialized solution algorithms for the MILP models, such as decomposition approaches. 

Our developments were originally inspired by the discussion in  \citep{salo2022decision}, who pointed to the possibility of aggregating the information needed for defining optimal decisions as a direction for development. In light of the effectiveness of the proposed reformulation, we believe that this is still a valuable future research direction, i.e., to explore other means of variable aggregations to reduce the scale of the resulting formulation even further. Moreover, a rigorous analytical framework for choosing the most effective MILP model when solving an influence diagram is yet to be developed. Therefore, future research efforts should be directed toward finding effective heuristics or statistical guarantees that would guide the selection of the framework to solve a given influence diagram as efficiently as possible. 






 \appendix
 \section{Proofs}
\label{sec:proofs}

\noindent\textbf{Proof of Proposition \ref{prop:suboptimal_solution}}

First, we show that $(x^{*},z)$ fulfills constraints \eqref{eq:dp_strategy1}-\eqref{eq:dp_path_compat}. Because $(x,z)$ is assumed to fulfill the constraints \eqref{eq:dp_strategy1}-\eqref{eq:dp_path_compat}, Constraints \eqref{eq:dp_strategy1} are trivially fulfilled by $(x^{*},z)$. Similarly, Constraints \eqref{eq:dp_strategy} and \eqref{eq:dp_path_compat}  are trivially fulfilled. Then, it remains to show that $(x^{*},z)$ fulfills Constraint \eqref{eq:dp_path_compat_strategy}. 

Consider $s \in S^>$. If $x^{*}(s) = 0$, Constraints \eqref{eq:dp_path_compat_strategy} are trivially fulfilled. If $x^{*}(s) = 1$, then there must exist $s' \in E(s_O)$ such that $x(s') > 0$. As we assume $(x,z)$ to be feasible for \eqref{eq:dp_strategy1}-\eqref{eq:dp_path_compat}, it necessarily follows that $z(s'_d \mid s'_{I(d)}) = 1, \forall d \in D$, where $s'_d$ and $s'_{I(d)}$ are taken from $s'$. Also note that $d \in O$ and $I(d) \subsetneq O, \forall d \in D$. Moreover, as we assume that $s' \in E(s_O)$, we have that $s_O = s'_O$. Therefore it follows that $z(s'_d \mid s'_{I(d)}) = z(s_d \mid s_{I(d)}) = 1, \forall d \in D$, where $s_d$ and $s_{I(d)}$ are taken from $s$. Hence, Constraint \eqref{eq:dp_path_compat_strategy} is also fulfilled, and $(x^{*},z)$ is feasible for \eqref{eq:dp_strategy1}-\eqref{eq:dp_path_compat}.

Next, we prove the suboptimality of $(x,z)$. Notice that in \eqref{eq:dp_strategy1}-\eqref{eq:dp_path_compat} we assume $U(s) > 0, \forall s \in S^{>} \supseteq S$ and $p(s) > 0 , \forall s \in S^{>}$. As we assume that $x^{*}(s) = \max_{s' \in E^{>}(s_O)}x(s')$, we have that $x^{*}(s) \geq x(s), \forall s \in S^{>}$. Therefore, it follows that $p(s)U(s)x^{*}(s) \geq p(s)U(s)x(s), \forall s \in S^{>}$. Moreover, by assumption that $\exists s \in S^{>} \text{ s.t. } x(s) \neq x^{*}(s)$ there exists at least one $s \in S^{>}$ such that $x^{*}(s) > x(s)$. Then, $\sum_{s \in S^{>}}p(s)U(s)x^{*}(s) > \sum_{s \in S^{>}}p(s)U(s)x(s)$ follows.

\vspace{12pt}

\noindent\textbf{Proof of Proposition \ref{prop:objectives_match}}

Notice the following:
\begin{align*}
&\sum_{s \in S_O}\mathbb{E}_U(s_O)y(s_O) = 
\sum_{s \in S_O}\biggr[\sum_{s' \in E(s_O)}p(s')U(s')\biggr]y(s_O) = \sum_{s \in S_O}\sum_{s' \in E(s_O)}p(s')U(s')y(s_O) =\\ 
&\sum_{s \in S_O}\biggr[\sum_{s' \in E(s_O) \setminus S^{>}}p(s')U(s')y(s_O) + \sum_{s' \in E^{>}(s_O)}p(s')U(s')y(s_O)\biggr]=\\ 
&\sum_{s \in S_O}\sum_{s' \in E^{>}(s_O)}p(s')U(s')y(s_O) = 
\sum_{s \in S_O}\biggr[\sum_{s' \in E^{>}(s_O)}p(s')U(s')x(s')\biggr] =\\ 
&\sum_{s \in S^{>}}p(s)U(s)x(s).
\end{align*}
The first equality follows directly from the definition of $\mathbb{E}_U(s_O)$. The second equality simply moves $y(s_O)$ inside the inner sum. The third equality splits the inner sum. The fourth equality follows from the definition of $S^{>}$ so that $p(s') = 0, \forall s' \in E(s_O) \setminus S^{>} $. The fifth equality follows based on \eqref{eq:reduced}, which reduces to $x(s') = y(s_O), \forall s' \in E(s_O)$. The last equality simply combines the sums.

\vspace{12pt}

\noindent\textbf{Proof of Proposition \ref{prop:equivalence}}

Assume that $(y, z)$ is a feasible solution for \eqref{eq:dpr_strategy1}-\eqref{eq:dpr_path_compat}. Constraints \eqref{eq:dpr_strategy1} and \eqref{eq:dp_strategy1} are equivalent, and therefore, by assumption $z(s_d \mid s_{I(d)})$ fulfills \eqref{eq:dp_strategy1}. Additionally, Constraints \eqref{eq:dp_strategy} and \eqref{eq:dp_path_compat} hold trivially.

Let $d \in D$, $s_d \in S_d$, and $s_{I(d)} \in S_{I(d)}$. For Constraints \eqref{eq:dp_path_compat_strategy}, assume that $y(s_{O}) = 1$ for some $s_{O} \in E_{O}(s_{\{d\} \cup I(d)})$. Then, according to constraints \eqref{eq:dpr_path_compat_strategy}, $z(s_d \mid s_{I(d)}) = 1$. Then, the corresponding Constraints \eqref{eq:dp_path_compat_strategy} for $d, s_d$, and $s_{I(d)}$ reduce to 
\begin{align*}
\sum_{s \in E(s_{\{d\} \cup I(d)})}x(s) \leq \Gamma(s_d \mid s_{I(d)}).
\end{align*}

According to \eqref{eq:reduced}, $x(s) = 1, \forall s \in E(s_{O})$. As we assume $s_{O} \in E_{O}(s_{\{d\} \cup I(d)})$, it also means that $s \in E(s_{\{d\} \cup I(d)}), \forall s \in E(s_{O})$. As we assume $\Gamma(s_d \mid s_{I(d)})$ to be a bigM-constant for \eqref{eq:dp_objective}-\eqref{eq:dp_path_compat}, $x(s) = 1, \forall s \in E(s_{O})$ is a feasible solution. Conversely, assume that $y(s_{O}) = 0$ for some $s_{O}$. Then, $x(s) = 0, \forall s \in E(s_{O})$. Then, Constraints \eqref{eq:dp_path_compat_strategy} are trivially fulfilled for any feasible $z(s_d \mid s_{I(d)})$.

\vspace{12pt}

\noindent\textbf{Proof of Proposition \ref{prop:optimality}}

Assume that $(y^{*}, z^{*})$ is an optimal solution for \eqref{eq:dpr_objective}-\eqref{eq:dpr_path_compat}. First, due to Proposition \ref{prop:equivalence}, $(x^{*},z^{*})$ is a feasible solution that fulfills the constraint set \eqref{eq:dp_strategy1}-\eqref{eq:dp_path_compat}. By an optimality argument, it follows that for any $(y, z)$, which is feasible for \eqref{eq:dpr_objective}-\eqref{eq:dpr_path_compat}, we have that
$$
\sum_{s_{O} \in S_{O}}\mathbb{E}_U(s_{O})y(s_{O}) \leq \sum_{s_{O} \in S_{O}}\mathbb{E}_U(s_{O})y^{*}(s_{O}).
$$

Let $(x, z)$ be a feasible solution for the constraint set \eqref{eq:dp_strategy1}-\eqref{eq:dp_path_compat}. Let $x'(s) = \max_{s' \in E(s_O)}x(s')$. Then, the following  holds true:
\begin{align*}
    \sum_{s \in S^{>}}p(s)U(s)x(s) &\leq 
    \sum_{s \in S^{>}}p(s)U(s)x'(s) =
    \sum_{s \in S}p(s)U(s)x'(s) =
    \sum_{s_{O} \in S_{O}}\mathbb{E}_U(s_O)y'(s_{O}) \leq\\ \sum_{s_{O} \in S_{O}}\mathbb{E}_U(s_{O})y^{*}(s_{O}) &=
    \sum_{s \in S^{>}}p(s)U(s)x^{*}(s).
\end{align*}

Notice that if $x(s) = 1$ for any $s \in S^{>}$, then $x'(s') = 1, \forall s' \in E(s_O)$ and therefore, $x'$ can be represented with $y'$ as defined in \eqref{eq:reduced} based on $x'$ without loss of generality. The first inequality follows from the assumption that $p(s)U(s) \geq 0, \forall s \in S$ and the assumption that $x'(s) \geq x(s), \forall s \in S$. The first equality holds due to adding terms for paths $s \in S \setminus S^{>}$, for which $p(s) = 0$. The second equality follows from Proposition \eqref{prop:objectives_match}. The second inequality follows from the assumption of optimality of $y^{*}$ and the last equality follows by applying Proposition \ref{prop:objectives_match} and transformation \eqref{eq:reduced}. Therefore, $(x^{*},z^{*})$ is optimal for \eqref{eq:dp_objective}-\eqref{eq:dp_path_compat}.

\vspace{12pt}

\noindent\textbf{Proof of Proposition \ref{prop:valid_inequality}}

Consider $\bar{s}_{C_I} \in S_{C_I}$ and $s_O, s'_O \in E_O(\bar{s}_{C_I})$ so that $s_O \neq s'_O$. Then, $\exists d \in D$ s.t. $s_d \neq s'_d$. Assume that $y(s_{O}) = 1$. The assumption $y(s_O) = 1$ implies that $z(s_d \mid s_{I(d)}) = 1$ for all $d \in D$, where  $s_d$ and $s_{I(d)}$ are extracted from $s_O$. To prove the claim, we will show that there must exist $d \in D$ such that $z(s'_d \mid s'_{I(d)}) = 0$, where $s'_d$ and $s'_{I(d)}$ are extracted from $s'_O$. If this is the case, Constraints \eqref{eq:dpr_path_compat_strategy} ensure that $y(s'_O) = 1$ is not a feasible solution for constraint set \eqref{eq:dpr_objective}-\eqref{eq:dpr_path_compat}.

Denote $I_C(d) = C \cap I(d)$ and $I_D(d) = D \cap I(d)$ as the chance nodes and decision nodes contained in an information set of $d \in D$. Since we assume that value nodes cannot be parents of other nodes, $I(d) = I_C(d) \cup I_D(d)$. By assumption, $\exists d \in D$ such that $s'_d \neq s_d$. If $I_D(s_d) = \emptyset$, it means that $z(s'_d \mid s'_{I(d)}) = 0$. To see this, note that $I_C(d) \subseteq C_I$ and subsequently $s'_{I_C(d)} = s_{I_C(d)}$. Consequently $s_{I(d)} = s'_{I(d)}$. By assumption of $y(s_O) = 1$, it holds that $z(s_d \mid s_{I(d)}) = 1$. Constraints \eqref{eq:dpr_strategy1} then force $z(s'_d \mid s_{I(d)}) = 0$. Then, according to  \eqref{eq:dpr_path_compat_strategy}, $y(s'_{O})$ would be forced to take value $0$.

If $z(s'_d \mid s'_{I(d)}) = 1$, it then necessarily means that $I_D(d) \neq \emptyset$ and $\exists d' \in I_D(d)$ such that $s'_{d'} \neq s_{d'}$. A similar inspection shows that if  $z(s'_{d'} \mid s'_{I(d')}) = 1$, it then follows that $I_D(d') \neq \emptyset$ and $\exists d'' \in I_D(d')$ such that $s'_{d''} \neq s_{d''}$. As the influence diagram is assumed to be acyclic, this process must eventually find $d^{*}$ such that $s_{d^{*}} \neq s'_{d^*}$ and $I_D(d^{*}) = \emptyset$. Then, by assumption and Constraint \eqref{eq:dpr_strategy1}, $z(s'_{d^{*}} \mid s'_{I(d^{*})}) = 0$ and consequently, $y(s'_{O}) = 0$.

\vspace{12pt}

\noindent\textbf{Proof of Proposition \ref{prop:condition_holds}}

We will prove the claim by using a contradiction. Assume that $(y,z)$ is a feasible solution for \eqref{eq:dpr2_objective}-\eqref{eq:dpr2_path_compat} where \eqref{eq:dpr2_valid_ineq} are equality constraints. Denote $I_C(d) = C \cap I(d)$ and $I_D(d) = D \cap I(d)$ as the chance nodes and decision nodes contained in an information set of $d \in D$. Assume that $\exists s_O \in S_O$ such that $z(s_d \mid s_{I(d)}) = 1, \forall d \in D$ and assume that $y(s_O) = 0$. This would contradict \eqref{eq:condition_for_cvar}. Then, based on Constraints \eqref{eq:dpr2_valid_ineq}, $\exists s'_O \in E_O(s_{C_I})$ such that $y(s'_O) = 1$, where $s_{C_I}$ is constructed with states from $s_O$. Next, we will prove that $\exists d \in D$ is such that $z(s'_d \mid s'_{I(d)}) = 0$, which would make $y(s'_O) = 1$ infeasible. 

By assumption $s'_O \neq s_O$ and $s'_{C_I} = s_{C_I}$. Then necessarily $\exists d \in D$ such that $s'_d \neq s_d$. If $I_D(d) = \emptyset$, it means that $z(s'_d \mid s'_{I(d)}) = 0$. To see this, note that $s'_{I_C(d)} = s_{I_C(d)}$, and consequently $s_{I(d)} = s'_{I(d)}$. As we assume $z(s_d \mid s_{I(d)}) = 1$, Constraints \eqref{eq:dpr_strategy1} then force $z(s'_d \mid s_{I(d)}) = 0$. Then, according to constraint \eqref{eq:dpr_path_compat_strategy}, $y(s'_{O})$ would be forced to take value $0$, which would then be a contradiction.

If $z(s'_d \mid s'_{I(d)}) = 1$, it then necessarily means that $I_D(d) \neq \emptyset$ and $\exists d' \in I_D(d)$ such that $s'_{d'} \neq s_{d'}$. A similar inspection shows that if  $z(s'_{d'} \mid s'_{I(d')}) = 1$, it then follows that $I_D(d') \neq \emptyset$ and $\exists d'' \in I_D(d')$ such that $s'_{d''} \neq s_{d''}$. As the influence diagram is assumed to be acyclic, this process must eventually find $d^{*}$ such that $s_{d^{*}} \neq s'_{d^*}$ and $I_D(d^{*}) = \emptyset$. Then, by assumption and Constraints \eqref{eq:dpr_strategy1}, $z(s'_{d^{*}} \mid s'_{I(d^{*})}) = 0$ and consequently, $y(s'_{O}) = 0$. This shows that $y(s'_O) = 0, \forall s'_O \in E_O(s_I) \setminus \{s_O\}$. Then, the only way to fulfill Constraints \eqref{eq:dpr2_valid_ineq} is to have $y(s_O) = 1$, which proves the claim.

\section{Conditional value-at-risk model}
\label{sec:full_cvar_model}

The optimization model \eqref{eq:cvar_ob}-\eqref{eq:cvar_end} leads to the decision strategy that maximizes CVaR at probability level $\alpha$.

\begin{align}
    \max_{\substack{z,y, \rho, \overline{\rho},\\ \eta, \lambda, \overline{\lambda}}} &~\frac{1}{\alpha}\sum_{u \in U}\overline{\rho}(u)u \label{eq:cvar_ob}\\
    \text{s.t.} &\sum_{s_d \in S_{d}}z(s_d \mid s_{I(d)}) = 1,  ~&\forall d \in D, s_{I(d)} \in S_{I(d)} \\ 
    & \sum_{s_{O} \in E_{O}(s_d,s_{I(d)})} y(s_{O}) \leq \Gamma(s_d \mid s_{I(d)}) z(s_d \mid s_{I(d)}),~&\forall d \in D, s_d \in S_d, s_{I(d)} \in S_{I(d)}\\
    & \sum_{s_O \in E_O(s_{C_I})}y(s_O) = 1, ~&\forall s_{C_I} \in S_{C_I}\\
    &\eta - u \le M \lambda(u), \  ~&\forall u \in U \\ 
    &\eta - u \ge (M+\epsilon) \lambda(u) - M, \  ~&\forall u \in U \\
    &\eta - u \le (M+\epsilon)\overline{\lambda}(u) - \epsilon, \  ~&\forall u \in U \\ 
    &\eta - u \ge M (\overline{\lambda}(u)-1), \  ~&\forall u \in U \\ 
    &\overline{\rho}(u) \le \overline{\lambda}(u), \  ~&\forall u \in U \\
    & q(u) - (1-\lambda(u)) \le \rho(u) \le \lambda(u) , \  ~&\forall u \in U \\
    &\rho(u) \le \overline{\rho}(u) \le q(u), \  ~&\forall u \in U \\ 
    &\sum_{u \in U} \overline{\rho}(u) = \alpha \\
    &\lambda(u), \overline{\lambda}(u) \in \{0,1\}, \  ~&\forall u \in U \\ 
    & y(s_{O}) \in [0,1], ~&\forall s_{O} \in S_{O}\\
    & z(s_d \mid s_{I(d)}) \in \{0,1\},~&\forall d \in D, s_d \in S_d, s_{I(d)} \in S_{I(d)} \\
    &\rho(u), \overline{\rho}(u) \in [0,1], \  ~&\forall u \in U \\
    &\eta \in \mathbb{R} \label{eq:cvar_end}. 
\end{align}

\section{Continuous turbine inspection and maintenance problem}
\label{sec:probability_dist}


Consider the turbine inspection and maintenance problem presented in Section \ref{sec:turbine}. We now assume that the chance nodes described in Table \ref{tab:nodes_turbine} are associated with continuous probability distributions that are specified in Table \ref{tab:probabilities}. We denote $\mathcal{N}(\mu,[\sigma^2_1,\sigma^2_2])$ a normal distribution with mean $\mu$ and a randomly generated variance $\sigma^2 \in [\sigma^2_1,\sigma^2_2]$ truncated on interval $[0,100]$. We denote as $\mathcal{N}_{(a,b)}(\mu,[\sigma^2_1,\sigma^2_2])$ as the normal distribution with mean $\mu$ and random variance $\sigma^2$ truncated on interval $[a,b]$. We denote by $\delta(a)$ the Dirac delta distribution on $a$. Finally, we denote $\mathcal{U}(a,b)$ as the uniform distribution on the interval $[a,b]$. Table \ref{tab:probabilities} presents the assumed conditional probability distributions. In the table, the column \emph{States} specifies the condition that the information state must fulfill for the probability distribution to be as specified. For instance, the conditional probability distribution for node $TR$ is $\delta(s_{TE})$ if $s_I = 0$, meaning that if no inspection is conducted, then the probability distribution of the turbine inspection result -node will assign all probability mass on the turbine state estimate. For $TR$, the variances are randomly generated by selecting two values from the uniform distribution $[0.2,10]$ and associating the larger variance with the probability distribution conditional on $s_I = 1$. Similarly, we generate the variances for $TF$ by randomly selecting three numbers from the uniform distribution $[0.2,10]$ and associating the largest variance with the probability distribution conditional on $s_M = 0$ and the smallest variance with $s_M = 2$.

\begin{table}
\caption{Conditional probability distributions for chance nodes}\label{tab:probabilities}
\begin{tabular}{@{}l@{\quad}c@{}@{\quad}c@{}@{\quad}c@{}}
\toprule
Node              & Information set  & States &    PDF       \\ \midrule
$FH$    & $-$      & - &   $\mathcal{U}(0,100)$     \\ 
\hline \\
$W$    & $-$      & - &   $\mathcal{U}(0,100)$     \\ 
\hline \\
$SS$    & $FH$      & - &   $\mathcal{N}(100-s_{FH},[10,100])$     \\ 
\hline \\
$TS$    & $FH, W$      & - &   $\mathcal{N}(100-(s_{FH}+s_{W})/2,[10,100])$       \\
\hline \\
$SE$    & $SS, TS$      & - &   $\frac{s_{TS}}{100}\mathcal{N}(s_{SS},[10,100]) + \frac{100-s_{TS}}{100}\mathcal{U}(0,100)$       \\
\hline \\
$TE$    & $TS, SE$      & - &   $\mathcal{N}(s_{SS},\frac{100-s_{SE}}{5})$       \\
\hline \\
$SR$    & $I, SS, SE$  & $s_I = 0$    &   $\delta(s_{SE})$ \\
    &   & $s_I \in \{1,2\}$    &   $\mathcal{N}(s_{SS},1)$ \\
\hline \\
$TR$    & $I, TS, TE, SR$      &    $s_I = 0$ &$\delta(s_{TE})$   \\
   &      &    $s_I = 1$ &$\frac{s_{SR}}{100}\mathcal{N}(s_{TS},[0.2,10]) +  \frac{100-s_{SR}}{100}\mathcal{U}(0,100)$,  \\
   & $I, TS, TE, SR$      &    $s_I = 2$ &$\mathcal{N}(s_{TS},[0.2,10])$\\
\hline \\
$TF$    & $M, TS, W$      &    $s_M = 0$ &$\mathcal{N}_{(0,s_{TS})}(s_{TS}-0.02s_{W},[0.2,10])$\\
  &      &    $s_M = 1$ &$\mathcal{N}_{(s_{TS},100)}(s_{TS} + \frac{(100 -s_{TS})}{2} -0.02s_{W},[0.2,10])$\\
   &      &    $s_M = 2$ &$\mathcal{N}_{(s_{TS},100)}(100,[0.2,10])$
\\ \bottomrule
\end{tabular}
\end{table}


We create random instances of the turbine inspection and maintenance problem by randomly selecting the variances for the continuous probability distributions of the chance nodes. We specify the domains for possible variance values in Table \ref{tab:probabilities}, which are uniformly sampled to create continuous probability distributions for the chance nodes. Moreover, we randomly generate the costs of the maintenance and inspection alternatives. The cost of the sensor check is uniformly sampled from $[0,20]$, the cost of turbine inspection is uniformly sampled from $[50,300]$, the cost of level-1 maintenance is uniformly sampled from $[100,2000]$, and finally, the cost of level-2 maintenance is uniformly sampled $[3000,8000]$. We assume that the turbine flow results in rewards given by the function $f(s_{TF}) = \frac{1}{100}s_{TF}^3$. The utility associated with consequences is calculated by subtracting maintenance and inspection costs from turbine flow rewards.

The influence diagram representation of the turbine inspection and maintenance problem requires the continuous events to be discretized and represented as a finite set of possible states. To create a discretization of size $N$, we select $N-1$ breakpoints $b_n, n \in 1,\dots,N-1$, from the domain of the continuous state space of each chance node $c \in C$. We create the discrete state space as $S_c = \{[0,b_1],[b_1,b_2],...,[b_n,100]\}$. Table \ref{tab:discretization} presents the breakpoints used in this computational example. For simplicity, we use the same set of breakpoints for each chance node. We create conditional probabilities by generating a set of sample paths $\overline{S}$ by sampling the conditional probability distributions presented in Appendix \ref{sec:probability_dist}. From the sample paths, we derive discrete conditional probabilities as $\mathbb{P}(s_c \mid s_{I(c)}) = |\{\overline{s} \in \overline{S}: \overline{s}_c \in s_c, \overline{s}_{I(c)} \in s_{I(c)}\}|/|\{\overline{s} \in \overline{S}: \overline{s}_{I(c)} \in s_{I(c)}\}|$. Utilities $U(s_{I(v)})$ are created by calculating the average utility for sample paths that have the states $s_{I(v)}$ as $U(s_{I(v)}) = \sum_{s \in \overline{S}_{s_{I(v)}}}\frac{U(s)}{|\overline{S}_{s_{I(v)}}|}$, where $\overline{S}_{s_{I(v)}} = \{\overline{s} \in \overline{S}: \overline{s}_{I(v)} \in s_{I(v)}\}$.

\begin{table}
\caption
{Breakpoints used in the turbine inspection and maintenance problem}\label{tab:discretization}
\begin{tabular}{@{}l|@{\quad}c}
\toprule 
$N$              & $b_n$     \\ \midrule
$2$    & 50   \\ 
$3$    & 50, 75  \\ 
$4$    & 25, 50, 75  \\ 
$5$    &25, 50, 75 , 87.5\  \\ 
$6$    &25, 50, 62.5, 75, 87.5
\\ \bottomrule 
\end{tabular}{}
\end{table}


To then highlight the value of the proposed formulation, we compare the performance of the decision strategies generated using discretizations of size $N = 2,...,6$, when applied to the turbine inspection and maintenance problem. We randomly generate 400 continuous instances of the problem and solve the optimal decision strategies using discretizations based on the breakpoints specified in Table \ref{tab:discretization} with the discretization method described in this section. In each of the 400 analyzed instances, we test the derived decision strategies by sampling 100 000 paths in which we separately apply the decision strategies derived by using different discretizations. We calculate the resulting utility and report the average improvement compared to the baseline of using the decision strategy derived when solving the influence diagram with 2 discrete states. Figure \ref{fig:added_value} presents the average performance improvement when using a discretization of size $N$ to solve the problem. With current formulations, the largest turbine inspection and maintenance instance that can be solved to optimality can use a discretization of size $N=4$, which on average improves the performance of the decision strategy by $9.2 \%$. In contrast, the largest solvable instance with the proposed formulation uses $N=6$ discrete states, which results in an improvement of $11.2 \%$, which is 2 percentage points larger compared to using a discretization of size $N=4$. This example demonstrates that the developed formulations can be leveraged to find decision strategies that outperform the decision strategies found with existing formulations.

\begin{figure}[ht]
\centering 
\includegraphics[width=0.6\textwidth]{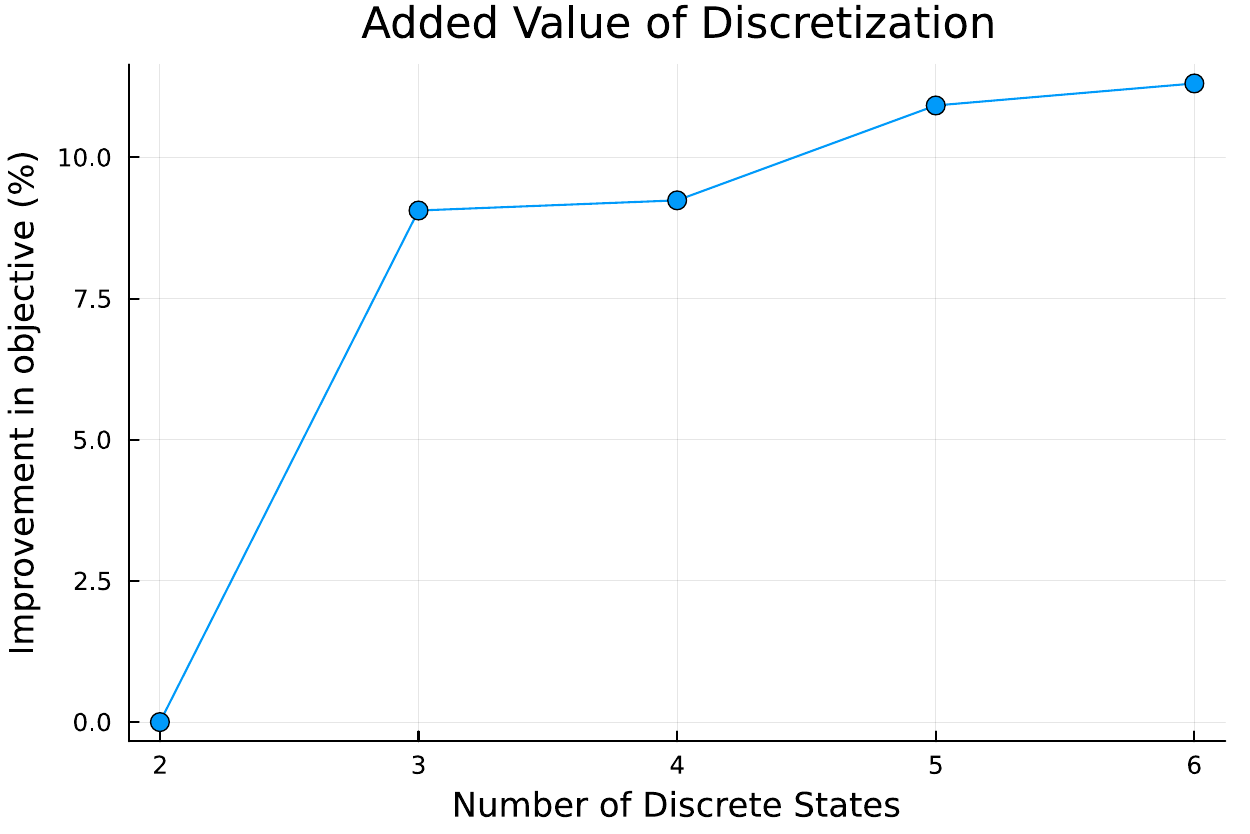}
\caption{Average relative improvement of optimal decision strategies solved by using a discretization of specific size when compared to a model solved when using 2 discrete states.}
\vspace{-0.5cm}
\label{fig:added_value}
\end{figure}

\printcredits

\bibliographystyle{cas-model2-names}

\bibliography{bibliography}



\end{document}